\documentclass[11pt]{article}

\usepackage{graphicx}
\usepackage{pdfpages}
\usepackage{amssymb}
\usepackage{amsmath}
\usepackage{amsthm}
\usepackage{latexsym} 
\usepackage{color}
\usepackage{pstricks} 
\usepackage{psfrag}
\usepackage{rotating} 
\usepackage{amsmath,amsthm,amssymb,graphicx,bbm,cancel,color,mathrsfs,todonotes, xypic,hyperref, pinlabel}

\setlength{\topmargin}{-1.cm}
\setlength{\headsep}{1.6cm}
\setlength{\evensidemargin}{.7cm}
\setlength{\oddsidemargin}{.7cm}
\setlength{\textheight}{21cm}
\setlength{\textwidth}{15.2cm}

\setlength{\parindent}{0pt}
\setlength{\parskip}{1.25ex}

\newtheorem{theorem}{Theorem}
\newtheorem{lemma}[theorem]{Lemma}
\newtheorem{corollary}[theorem]{Corollary}
\newtheorem{conjecture}[theorem]{Conjecture}
\newtheorem{proposition}[theorem]{Proposition}

\newtheorem{observation}[theorem]{Observation} 

\theoremstyle{definition}
\newtheorem{definition}[theorem]{Definition}
\newtheorem{example}[theorem]{Example}

\usepackage{hyperref}

\newcommand\vn{\varnothing}

\newcommand\R{\mathbb{R}}

\newcommand\Z{\mathbb{Z}}
\newcommand\CAT{\mathrm{CAT}}

\newcommand\Isom{\text{Isom}}
 
\newcommand\lb{\lbrace} 
\newcommand\rb{\rbrace} 
\newcommand{\ca}{\mathcal}
\newcommand{\ov}{\overline}
\newcommand{\ti}{\times}

\newcommand{\Ga}{\Gamma}
\newcommand{\om}{\omega}
\newcommand{\Lam}{\Lambda}

\newcommand{\dmo}{\DeclareMathOperator}
\dmo{\pa}{\partial}
\dmo{\PD}{PD}
\dmo{\diam}{diam}
\dmo{\intt}{int}
\dmo{\Sh}{Sh}
\newcommand{\bb}{\mathbb}\newcommand{\si}{\sigma}
\newcommand{\de}{\delta}
\newcommand{\ga}{\gamma}
\newcommand{\Om}{\Omega}
\newcommand{\al}{\alpha}
\newcommand{\be}{\beta}
\newcommand{\la}{\lambda}
\newcommand{\ld}{\ldots}
\newcommand{\cd}{\cdots}
\newcommand{\sm}{\setminus}
\newcommand{\ra}{\rightarrow}
\newcommand{\hra}{\hookrightarrow}
\newcommand{\sbs}{\subset}
\newcommand{\scr}{\mathscr}
\newcommand{\rest}[2]{#1\bigr\vert_{#2}}

\title{On groups with $S^2$ Bowditch boundary} 

\begin{document} 
\author{Bena Tshishiku}
\author{Bena Tshishiku and Genevieve Walsh}


\date{\today}


\maketitle

\begin{abstract} 
We prove that a relatively hyperbolic pair $(G,\ca P)$ has Bowditch boundary a 2-sphere if and only if it is a 3-dimensional Poincar\'e duality pair. We prove this by studying the relationship between the Bowditch and Dahmani boundaries of relatively hyperbolic groups. 
\end{abstract}

\section{Duality for groups with Bowditch boundary $S^2$}

The goal of this paper is to study the duality properties of relatively hyperbolic pairs $(G,\ca P)$. This builds on work of Bestvina--Mess \cite{bestvina-mess}, who show that the duality properties of a hyperbolic group $G$ are encoded in its Gromov boundary $\pa G$; for example, a hyperbolic group $G$ with Gromov boundary $\pa G\simeq S^{n-1}$ is a $\PD(n)$ group. By analogy, one might hope for a similar result for relatively hyperbolic pairs $(G,\ca P)$ with the Gromov boundary replaced by the Bowditch boundary $\pa_B(G,\ca P)$. This would follow immediately from \cite{bestvina-mess} if the Bowditch boundary gave a Z-set compactification of $G$, but unfortunately this is not the case, and \cite{bestvina-mess} does not imply that  $(G, \mathcal{P})$ is a duality pair whenever $\pa_B(G, \mathcal{P})\simeq S^{n-1}$.  Instead we work with the \emph{Dahmani boundary} $\pa_D(G, \mathcal{P})$ (see \S\ref{s:Dahmani}), which does give a Z-set compactification. Our main theorem determines the Dahmani boundary when $\pa_B(G,\ca P)\simeq S^2$. 

\begin{theorem}\label{thm:sierpinski}
A relatively hyperbolic group $(G, \mathcal{P})$ with Bowditch boundary $\pa_B(G, \mathcal{P})\simeq S^2$ has Dahmani boundary $\pa_D(G, \mathcal{P})\simeq \scr S$ a Sierpinski carpet. 
\end{theorem}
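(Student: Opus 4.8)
The plan is to build the Dahmani boundary from the Bowditch boundary by understanding how the two differ. The key conceptual point is that the Dahmani boundary is obtained from the Bowditch boundary by "blowing up" each parabolic point into a copy of the boundary of the corresponding peripheral group's cusped space (or, more precisely, by inserting the Dahmani-style compactification of each peripheral coset). Concretely, the Bowditch boundary $\pa_B(G,\ca P) \simeq S^2$ has a countable dense set of parabolic fixed points, one $G$-orbit for each $P \in \ca P$ up to conjugacy, and the Dahmani boundary $\pa_D(G,\ca P)$ should be homeomorphic to $S^2$ with an open disk removed around each parabolic point. Removing a dense collection of disjoint open disks from $S^2$ — such that no point of the remainder is interior to the complement and the remaining set is compact, connected, locally connected, and has empty interior — yields a Sierpinski carpet by Whyburn's topological characterization. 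So the endgame is to verify Whyburn's axioms.

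First I would recall the precise relationship between $\pa_D$ and $\pa_B$: there is a natural $G$-equivariant quotient map $q\co \pa_D(G,\ca P) \to \pa_B(G,\ca P)$ that is a homeomorphism away from the parabolic points and collapses, over each parabolic point $p$ fixed by a conjugate $gPg^{-1}$, a "peripheral" piece to the single point $p$. I would identify this peripheral piece: since $(G,\ca P)$ is relatively hyperbolic with $\pa_B \simeq S^2$, each $P \in \ca P$ acts on the link of its parabolic point, and the relevant piece in $\pa_D$ is the boundary circle cutting off the horoball, so that $q^{-1}(p)$ is a circle and the blow-up replaces each parabolic point by the boundary of a two-dimensional horoball. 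This is where I expect the analysis of the peripheral subgroups to enter: I need each $P$ to be a virtual surface group or more generally a two-dimensional Poincar\'e duality group so that its contribution is a circle; this should follow because the local structure of $S^2$ near a parabolic point forces the peripheral structure, using that $\pa_B$ is a manifold.

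With the map $q$ understood, I would construct $\pa_D$ explicitly as a decomposition space / defect of $S^2$. Choose $G$-equivariantly a collection of disjoint round open disks $\{D_p\}$ in $S^2 \simeq \pa_B$, one for each parabolic point $p$, shrinking so that diameters tend to zero (possible since parabolic points accumulate only at the limit set and the action is geometrically finite). Set $\scr S = S^2 \sm \bigcup_p \intt(D_p)$. The homeomorphism type is insensitive to the exact choice of disks, so I would argue $\pa_D(G,\ca P) \homeo \scr S$ by matching the blow-up of $q$ with this disk-removal model. The main obstacle will be verifying the two delicate hypotheses in Whyburn's characterization: that the removed disks are \emph{dense} (equivalently, the parabolic points are dense in $S^2$) and that their closures are \emph{pairwise disjoint} with diameters tending to zero. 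Density of parabolic points is standard for non-elementary geometrically finite actions, but the disjointness and null-diameter conditions require a genuine argument: I must show distinct parabolic points admit disjoint horoball-disks and that only finitely many disks exceed any fixed size, which rests on the properness of the $G$-action on the cusped space and the fact that each $P$ is virtually $\Z^2$ or a surface group, giving a one-cusped contribution.

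Finally, I would assemble these facts: $\scr S$ is compact and connected since it is a closed subset of $S^2$ whose complement is a dense disjoint union of open disks (connectedness follows because removing a null sequence of disjoint open disks from $S^2$ cannot disconnect it); it is locally connected as a quotient of the locally connected $\pa_D$; it has empty interior by density of the removed disks; and no two complementary disks share boundary points by disjointness of closures. By Whyburn's theorem these conditions characterize the Sierpinski carpet up to homeomorphism, giving $\pa_D(G,\ca P) \homeo \scr S$ a Sierpinski carpet and completing the proof. I anticipate the bulk of the work is in the second paragraph—pinning down that $q^{-1}(p)$ is exactly a circle and that the peripheral subgroups have the required two-dimensional structure—since the topological endgame is then a direct appeal to Whyburn.
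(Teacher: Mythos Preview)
Your big picture is right---the collapsing map $c\co\pa_D(G,\ca P)\to\pa_B(G,\ca P)\simeq S^2$ has point or circle fibers, and the endgame is a Whyburn-style characterization---but you have misidentified where the work lies. Showing $c^{-1}(p)\simeq S^1$ is the easy part: it follows immediately once you know the peripheral subgroups are virtual surface groups (which, as you say, comes from their cocompact action on $S^2\sm\{*_P\}\simeq\R^2$). The genuine difficulty is your ``matching'' step, which you pass over in a sentence: given only the data of a quotient map $X\to S^2$ with circle fibers over a countable dense set, there is \emph{no} mechanism for embedding $X$ in $S^2$, and indeed such an $X$ need not be a Sierpinski carpet. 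The paper makes exactly this point (Remark~\ref{rmk:example} and Figure~\ref{fig:converging-disks}): one can build a compact $X$ with a map to $S^2$ whose non-point fibers are circles, and even such that collapsing all but one circle yields a disk, yet $X$ fails to be locally connected and is not a carpet. So your proposed construction---choose disjoint round disks $D_p$ in $S^2$ and declare $\pa_D\simeq S^2\sm\bigcup\intt D_p$---assumes the conclusion. (A side issue: the parabolic points are \emph{dense} in $S^2$, not ``accumulating only at the limit set,'' so a $G$-equivariant choice of disk radii is already problematic.)

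What the paper actually proves to close this gap is Theorem~\ref{th:homeo}: for each fixed $P$, the quotient $\pa_D(G,\ca P)_P$ obtained by collapsing every peripheral circle \emph{except} $\pa P$ is homeomorphic to a closed disk, and moreover the quotient map is a homeomorphism off the collapsed circles. This is what certifies that each peripheral circle sits in $\pa_D$ as the boundary of a ``removed disk,'' and it is the step that rules out the pathological behavior of Remark~\ref{rmk:example}. The proof of Theorem~\ref{th:homeo} in turn rests on Proposition~\ref{prop:shadow}---that shadows $\Sh_B(v,P)$ are bounded in the Bowditch metric on $\Om=S^2\sm\{*_P\}$---whose proof is a substantial geometric argument comparing quasi-geodesics in the coned-off graph, the cusped space, and horospheres. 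None of this is visible in your outline; it is precisely the content you would need to supply to make the ``matching'' step go through.
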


As a corollary, we find that if $\pa_B(G,\ca P)$ is a 2-sphere then the same is true for the Dahmani boundary of the \emph{double of $G$ along $\ca P$} (see \S\ref{sec:double} for the definition). 

\begin{corollary}\label{cor:sphere}
Let $(G, \mathcal{P})$ be a relatively hyperbolic pair, and let $G_\de$ denote the double of $G$ along $\ca P$. If $\pa_B(G, \mathcal{P})\simeq S^2$, then $\pa_D(G_\de,\mathcal{P})\simeq S^2$. 
\end{corollary}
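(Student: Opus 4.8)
The plan is to identify $\pa_D(G_\de,\ca P)$ explicitly as a ``tree of Sierpinski carpets'' and then recognize that space as $S^2$. Recall that $G_\de$ is the fundamental group of the graph of groups with two vertices, each labelled $G$, joined by one edge labelled $P$ for every $P\in\ca P$. A combination theorem for relatively hyperbolic groups shows $(G_\de,\ca P)$ is again relatively hyperbolic with peripheral structure $\ca P$. Let $T$ be the Bass--Serre tree of this splitting; then $G_\de$ acts on a tree of spaces over $T$ whose vertex spaces are copies of the space used to define $\pa_D$ for $(G,\ca P)$ and whose edge spaces are the corresponding spaces for the $P\in\ca P$. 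The first step is to invoke the combination theorem for Dahmani boundaries (equivalently, for $Z$-structures) to identify $\pa_D(G_\de,\ca P)$ with the ideal boundary of this tree of spaces: it is the union over $v\in V(T)$ of copies $\scr S_v$ of $\pa_D(G,\ca P)$, which by Theorem~\ref{thm:sierpinski} is a carpet, where adjacent carpets $\scr S_v,\scr S_w$ are glued along a circle $\pa P\simeq S^1$ coming from the edge joining $v$ and $w$, together with one point for each end of $T$. The key bookkeeping point is that \emph{every} complementary circle of \emph{every} vertex carpet is glued to a neighbour: the complementary circles of $\scr S\simeq\pa_D(G,\ca P)$ biject with the parabolic points of $\pa_B(G,\ca P)\simeq S^2$, hence with the cosets of the $P\in\ca P$ in a vertex group, hence with the edges of $T$ at that vertex.

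Second, I would identify this tree of carpets with $S^2$. Fix a base vertex $v_0$ and realize $\scr S_{v_0}$ planarly inside a $2$-sphere $\Si$, say $\scr S_{v_0}=\Si\sm\bigsqcup_j U_j$ with the $U_j$ disjoint open disks with disjoint closures and diameters tending to $0$. For each $j$ let $Y_j\sbs\pa_D(G_\de,\ca P)$ be the union of all vertex carpets lying beyond the edge $e_j$ at $v_0$, together with the relevant ends of $T$, so that $\pa_D(G_\de,\ca P)=\scr S_{v_0}\cup\bigcup_j Y_j$ with $\scr S_{v_0}\cap Y_j$ equal to the circle $C_j=\pa U_j$. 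The key claim is that each $Y_j$ is a closed $2$-disk with $\pa Y_j=C_j$; granting it, the homeomorphism $\scr S_{v_0}\ra\Si\sm\bigsqcup_j U_j$ extends over each $Y_j\ra\ov{U_j}$, and since both the $U_j$ and (by the visual metric on the tree of spaces) the $Y_j$ form null sequences these extensions assemble into a homeomorphism $\pa_D(G_\de,\ca P)\ra\Si\simeq S^2$. The key claim in turn reflects a self-similarity: $Y_j$ is itself a single carpet $\scr S_w$ carrying one distinguished free boundary circle $C_j$, each of whose other complementary circles bounds a copy of some $Y_k$ (for a child edge $e_k$), again a closed disk by the same description; thus $Y_j$ is ``a carpet with all but one hole filled in,'' i.e.\ a $2$-sphere minus one open disk.

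I expect the main obstacle to be making this recursion rigorous: the pieces $Y_k$ filled into a carpet accumulate — their closures are not disjoint, and infinitely many complementary circles can limit onto a single point of the carpet — so one cannot simply induct down the (non-locally-finite) tree, and one must also justify the null-sequence assertions from the metric structure. I would handle this either (i) by realizing $\pa_D(G_\de,\ca P)$ and each $Y_j$ as inverse limits of $2$-spheres, respectively $2$-disks, obtained by filling the pieces at tree-distance $\le n$ with round cells and collapsing everything deeper, with bonding maps that are homeomorphisms off sets of diameter tending to $0$, so that the limits are again $S^2$, respectively $D^2$, by Brown's theorem on near-homeomorphisms; or (ii) by appealing to a topological characterization of $S^2$ and $D^2$ among Peano continua (via the conditions that every simple closed curve separates and that no arc or point separates, adapted to the disk using its boundary circle), verifying the relevant separation properties of the tree of carpets from those of $\scr S$. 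A secondary point needing care is to cite the combination theorem for $\pa_D$ in precisely the form used above — that the only identifications made are along peripheral circles and that each end of $T$ contributes a single point — which should be transparent from the tree-of-spaces model but must be spelled out. As a sanity check: when $G=\pi_1 M$ for $M$ a compact hyperbolic $3$-manifold with totally geodesic boundary, this construction recovers the tiling of $S^2=\pa\HHH$ by the limit sets of the copies of $M$ inside the closed hyperbolic manifold $DM$, with each $Y_j$ an honest round disk.
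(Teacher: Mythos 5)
Your argument follows essentially the same route as the paper: both identify $\pa_D(G_\de,\ca P)$ as a tree of Sierpinski carpets (copies of $\pa_D(G,\ca P)$, via Theorem \ref{thm:sierpinski} and Dahmani's description of the boundary of an acylindrical graph of relatively hyperbolic groups) glued along their peripheral circles, with the edge spaces at each vertex forming a null family because the peripheral circles of a carpet do. The only divergence is the final recognition of this tree of carpets as $S^2$: the paper cites Swiatkowski's gluing lemma \cite[Lem 1.D.2.1]{swiatkowski}, whereas you sketch a direct proof of essentially that lemma via inverse limits and near-homeomorphisms, which is a reasonable (if more laborious) substitute.
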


From Corollary \ref{cor:sphere} we obtain the following corollary, which is our main application.  A finitely presented group $G$ is an \emph{oriented Poincar\'e duality group of dimension $n$} (a $\PD(n)$ group) if for each $G$-module $A$ there are isomorphisms $H^i(G;A)\ra H_{n-i}(G;A)$ for each $i$, induced by cap product with a generator of $H_n(G;\Z)$. A relative version of this definition was introduced by Bieri--Eckmann \cite{bieri-eckmann-pdpairs}. We will only need a special case: a group pair $(G,\ca P)$ is a \emph{$\PD(3)$ pair} if each $P\in\ca P$ is the fundamental group of a closed surface and the double of $G$ along $\ca P$ is a $\PD(3)$ group; c.f.\ \cite[Cor.\ 8.5]{bieri-eckmann-pdpairs}.

\begin{corollary}\label{cor:duality}
Let $(G, \mathcal{P})$ be a  torsion-free relatively hyperbolic pair with Bowditch boundary $\pa_B(G, \mathcal{P})\simeq S^2$. Then $(G, \mathcal{P})$ is a $\PD(3)$ pair. 
\end{corollary}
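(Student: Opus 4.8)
The plan is to deduce Corollary \ref{cor:duality} from Corollary \ref{cor:sphere} together with the Bestvina--Mess machinery \cite{bestvina-mess}. First I would observe that the hypotheses force each $P \in \ca P$ to be a surface group: since $(G,\ca P)$ is relatively hyperbolic with $\pa_B(G,\ca P) \simeq S^2$, each peripheral subgroup $P$ acts on $S^2$ with a canonical parabolic fixed point, and the link of that point (the ``boundary at infinity'' of the horoball quotient) is a circle; combined with torsion-freeness and standard facts about peripheral structures, $P$ is the fundamental group of a closed surface, which by the Euler characteristic / orientability bookkeeping one arranges to be orientable (or one simply notes that a closed surface group is $\PD(2)$ in the sense needed). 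This is precisely the first clause in the definition of a $\PD(3)$ pair given in the excerpt.

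Next I would invoke Corollary \ref{cor:sphere}: letting $G_\de$ be the double of $G$ along $\ca P$, we get $\pa_D(G_\de, \ca P) \simeq S^2$. The key point about the Dahmani boundary, recalled from \S\ref{s:Dahmani}, is that it provides a $Z$-set compactification $\ov{G_\de} = G_\de \cup \pa_D(G_\de,\ca P)$ of (a Rips-type complex for) $G_\de$. Moreover $G_\de$ is itself hyperbolic \emph{relative to} the (conjugates of the) $P$'s, but more importantly, since the $P$'s are surface groups and hence themselves $\PD(2)$ — in particular they admit finite $K(P,1)$'s and are ``good'' from the cohomological viewpoint — the Dahmani combination of a $Z$-set compactification of $G_\de$'s coned-off space with the Bestvina--Mess $Z$-structures on the peripheral pieces. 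The upshot is that $G_\de$ is a group of type $F$ admitting a $Z$-set compactification with boundary homeomorphic to $S^2$.

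Then I would apply the Bestvina--Mess theorem in the form: if a group $\Gamma$ of type $F$ admits a $Z$-set compactification $\ov{E\Gamma}$ with boundary $Z$ having the integral Čech cohomology of $S^{n-1}$ (equivalently $Z \simeq S^{n-1}$), then $\Gamma$ is a $\PD(n)$ group, and it is orientable exactly when the fundamental class of $Z \simeq S^{n-1}$ is preserved. Since $\pa_D(G_\de,\ca P) \simeq S^2$, this gives $H^k(G_\de; \Z G_\de) \cong \widetilde H^{k-1}(S^2;\Z)$, which vanishes except in degree $3$ where it is $\Z$; hence $G_\de$ is a $\PD(3)$ group (orientable, after possibly passing to an index-two subgroup or checking the $\ZZ$-bundle is trivial — but torsion-freeness plus the $S^2$ boundary, which has no orientation-reversing self-map issues in this fibered setting, gives orientability directly, or one argues via the peripheral surface groups being orientable). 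Combining ``each $P$ is a closed surface group'' with ``$G_\de$ is a $\PD(3)$ group'' is exactly the definition of a $\PD(3)$ pair quoted from \cite[Cor.\ 8.5]{bieri-eckmann-pdpairs}, completing the proof.

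The main obstacle I expect is the verification that the Dahmani compactification of $G_\de$ (or of the appropriate Rips complex) is genuinely a $Z$-set compactification of a finite-type $K(G_\de,1)$-model to which Bestvina--Mess applies verbatim — i.e., reconciling the ``cusped'' Rips complex used to define $\pa_D$ with the requirement of a $Z$-structure on an $E G_\de$; this needs the peripheral subgroups to be of type $F$ with $Z$-structures (true, as they are closed surface groups) and a gluing argument showing the global object still has the $Z$-set property. A secondary, more bookkeeping-level obstacle is pinning down \emph{orientability} of the $\PD(3)$ structure on $G_\de$: one must check that the relevant local coefficient system is trivial, which should follow either from the explicit $S^2$ (rather than a nonorientable surface) appearing as the boundary, or by passing to a finite-index subgroup and using that $\PD(3)$-ness descends/ascends appropriately — though for the stated corollary, which only asserts $(G,\ca P)$ is a $\PD(3)$ pair in the sense defined, it may suffice to exhibit $G_\de$ as a $\PD(3)$ group without separately tracking orientation if the definition is read with that latitude.
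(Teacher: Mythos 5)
Your proposal follows essentially the same route as the paper: verify the peripheral subgroups are (PD(2)) surface groups, apply Corollary \ref{cor:sphere} to get $\pa_D(G_\de,\ca P)\simeq S^2$, use Dahmani's theorem that $\pa_D$ gives a $\ca Z$-set compactification of $G_\de$ so that Bestvina--Mess (Theorem \ref{thm:bestvina}) yields that $G_\de$ is a $\PD(3)$ group, and conclude via the Bieri--Eckmann criterion. The concerns you raise (that $\pa_D$ genuinely gives a $\ca Z$-structure, and orientability) are handled in the paper by citing \cite[Thm 0.2]{dahmani-boundary} and by the torsion-free hypothesis, so no essential gap remains.
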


The converse is also true.

\begin{theorem} \label{th:converse} Let $(G, \mathcal{P})$ be a  relatively hyperbolic pair.  If $(G, \mathcal{P})$ is a $\PD(3)$ pair, then   $\pa_B(G, \mathcal{P})\simeq S^2$. \end{theorem}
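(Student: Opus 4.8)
The plan is to prove the converse: assuming $(G,\ca P)$ is a $\PD(3)$ pair—meaning each $P\in\ca P$ is a closed surface group and the double $G_\de$ is a $\PD(3)$ group—I want to conclude $\pa_B(G,\ca P)\simeq S^2$. The natural strategy is to pass to the double $G_\de$, understand its boundary, and then recover the Bowditch boundary of $(G,\ca P)$ by cutting along the peripheral structure. First I would observe that $G_\de$ is itself (hyperbolic or) relatively hyperbolic: since the peripheral subgroups $P\in\ca P$ are closed surface groups, the double along $\ca P$ glues two copies of $G$ along these surface subgroups, and the combination theorem (Dahmani, or Bestvina--Feighn style) tells us that $G_\de$ is relatively hyperbolic relative to the surface subgroups, or in the closed case is itself word-hyperbolic. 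Because $G_\de$ is a $\PD(3)$ group, a theorem of Bestvina--Mess together with the characterization of $\PD(3)$ groups (in the hyperbolic case, $\pa G_\de\simeq S^2$ by Bestvina--Mess duality, since $\PD(n)$ corresponds to $\pa\simeq S^{n-1}$) identifies its boundary as a 2-sphere.

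The second and central step is to recover $\pa_B(G,\ca P)$ from the sphere $\pa G_\de\simeq S^2$. In the double, the two copies $G^+$ and $G^-$ of $G$ sit inside $G_\de$, and the peripheral surfaces separate them. Geometrically, the $\PD(3)$ structure with closed-surface peripherals means that each $P$ bounds; the limit set of each copy $G^\pm$ in $\partial G_\de\simeq S^2$ should be a closed disk (or a Sierpinski-carpet-like piece), and these pieces are glued along the circles that are the limit sets of the surface subgroups $P$. The Bowditch boundary $\pa_B(G,\ca P)$ is obtained from $\pa_D(G,\ca P)$ by collapsing each peripheral piece (the limit set of each $P$, which for a closed surface group is a circle) to a point. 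Dually, I expect the Dahmani boundary of $G$ to be identified with the closed disk (a sphere with one copy of $G$'s worth removed), namely the limit set of a single copy $G^+\subset G_\de$, whose frontier circles correspond to the peripheral subgroups; collapsing these frontier circles to points then yields a sphere, giving $\pa_B(G,\ca P)\simeq S^2$.

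To make this rigorous I would invoke the relationship between the Bowditch and Dahmani boundaries developed in the earlier sections (referenced as \S\ref{s:Dahmani}), where the Dahmani boundary is a Z-set compactification and the Bowditch boundary is its quotient by collapsing peripheral pieces. The key topological input is that if a closed disk $D$ (the Dahmani piece of $G$, a Sierpinski carpet with peripheral circles as boundary components) is quotiented by collapsing each of its boundary circles to a point, the result is a 2-sphere. Alternatively, and perhaps more cleanly, I would argue directly on $G_\de$: its Bowditch/Gromov boundary is $S^2$, and the Bowditch boundary of $(G,\ca P)$ is the quotient of a hemisphere of this $S^2$ obtained by collapsing the equatorial peripheral circles, which is again $S^2$. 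This uses the fact that in a $\PD(3)$ pair the peripheral $P$ are $\PD(2)$ (surface groups), so their limit sets are circles.

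The main obstacle I anticipate is the careful identification of the limit set of a single copy $G^+$ inside $\pa G_\de$ and the verification that collapsing its peripheral circles produces a sphere rather than some more complicated quotient. This requires controlling how the two copies of $G$ meet along the surface subgroups in the boundary $S^2$—showing that the peripheral limit circles are tame (unknotted, bicollared) Jordan curves that separate $S^2$ into the two pieces $\Lambda(G^+)$ and $\Lambda(G^-)$, each a closed disk once the circles are filled in. Establishing this separation and tameness, presumably via the convergence-group action and a Mayer--Vietoris / duality argument on $G_\de$, together with an appeal to the Jordan--Sch\"onflies theorem to identify the pieces as disks, is where the real work lies; the final collapse to $S^2$ is then a routine quotient computation.
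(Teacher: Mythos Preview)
Your overall plan matches the paper through the first stage: pass to the double $G_\de$, use Dahmani's combination theorem to see that $(G_\de,\ca P)$ is relatively hyperbolic, use Dahmani's $\ca Z$-structure, and then invoke Bestvina--Mess (Theorem~\ref{thm:bestvina}) to conclude that the relevant boundary of $G_\de$ is $S^2$. One caveat here: in general $G_\de$ is \emph{not} word-hyperbolic (some $P_i$ can be $\Z^2$), so the boundary that is forced to be $S^2$ is the Dahmani boundary $\pa_D(G_\de,\ca P)$, not a Gromov boundary. You should commit to this from the start rather than treating the hyperbolic case as typical.

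The genuine gap is in your recovery step. Your picture of $\pa_D(G_\de,\ca P)\simeq S^2$ as two ``hemispheres'' $\Lambda(G^+)$ and $\Lambda(G^-)$ separated by peripheral circles is not correct: the Bass--Serre tree for the double is infinite, so $\pa_D(G_\de,\ca P)$ is a \emph{tree} of infinitely many copies of $\pa_D(G,\ca P)$ glued along peripheral circles, not two disks glued along their boundaries. Consequently, a single peripheral circle does not separate $S^2$ into ``the $G^+$ side'' and ``the $G^-$ side''; each side contains infinitely many further vertex pieces. Your proposed endgame (Jordan--Sch\"onflies plus a Mayer--Vietoris argument to identify two disks) therefore does not apply as stated, and the ``hemisphere quotient'' description of $\pa_B(G,\ca P)$ does not make sense.

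The paper sidesteps this by never trying to locate $\pa_D(G,\ca P)$ inside $S^2$. Instead it collapses \emph{all} peripheral circles in $\pa_D(G_\de,\ca P)\simeq S^2$ at once, obtaining $\pa_B(G_\de,\ca P)$; since each embedded circle in $S^2$ bounds a disk on either side, the quotient is a tree of $2$-spheres glued at points. Then Bowditch's cut-point/peripheral-splitting theory together with Dahmani's description of the Bowditch boundary of an amalgam identify each vertex $2$-sphere in this tree with $\pa_B(G,\ca P)$, finishing the proof. If you want to salvage your approach, you would need to replace the two-hemisphere picture with this tree-of-spaces picture and argue (e.g.\ via Moore's decomposition theorem) that collapsing, in $S^2$, the closures of the complementary components of a single vertex piece yields $S^2$; that is doable but is essentially a reorganization of the same ingredients.
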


{\it Remark.} As a motivating example of Corollary \ref{cor:duality}, suppose $G$ is the fundamental group of a hyperbolic 3-manifold $M$ with $k$ cusps and $\ell$ totally geodesic  boundary components. Then $(G, \mathcal{P})$ is a relatively hyperbolic, where $\mathcal{P}$ consists of conjugates of the boundary and cusp subgroups $\lbrace P_1,\ld,P_{k+\ell} \rbrace$. On the one hand, $(G,\ca P)$ is a PD(3) pair because $M$ is a $K(G,1)$ and manifolds satisfy Poincar\'e duality. (Alternatively, remove neighborhoods of the cusps and take the double.) On the other hand, Corollary \ref{cor:duality} gives a geometric-group-theoretic proof since $\pa_B(G,\ca P)\simeq S^2$ (see e.g.\ \cite{ruane,tran}).  A different, homological proof of Corollary \ref{cor:sphere} and Theorem \ref{th:converse} is given in Manning-Wang  \cite[Corollary 4.3]{manning-wang}.

{\it Relation to the Wall and Cannon conjectures.} The Wall conjecture \cite{wall_problems}  posits (in dimension 3) that any $\PD(3)$ group is the fundamental group of a closed aspherical 3-manifold. Similarly, one would conjecture that if $(G,\ca P)$ is a $\PD(3)$ pair, then $G$ is the fundamental group of an aspherical 3-manifold with boundary, where $\ca P$ is the collection of conjugacy classes of the boundary subgroups. 

\begin{conjecture} [The relative Cannon conjecture] Let $(G, \mathcal{P})$ be a  relatively hyperbolic group pair with $G$ torsion-free. If $\pa_B(G, \mathcal{P})\simeq S^2$, then $G$ is the fundamental group of a finite volume hyperbolic $3$-manifold $M$.  Furthermore, the peripheral groups are the fundamental groups of the cusps and totally geodesic boundary components of $M$. 
\end{conjecture}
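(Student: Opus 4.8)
The plan is to treat this conjecture as a geometrization problem and reduce it, as cleanly as possible, to an instance of the (still open) Cannon conjecture. The starting point is Corollary \ref{cor:duality}, which already promotes the hypothesis $\pa_B(G,\ca P)\simeq S^2$ to the assertion that $(G,\ca P)$ is a $\PD(3)$ pair; what remains is to upgrade this duality structure to a genuine hyperbolic metric. Theorem \ref{thm:sierpinski} makes the expected manifold-picture transparent: $G$ should act on $S^2$ as a geometrically finite Kleinian group whose limit set is a round Sierpinski carpet $\scr S$, with the genus-$\geq 2$ peripherals appearing as the Fuchsian stabilizers of the complementary round disks (the totally geodesic boundary) and the torus peripherals as the parabolic points. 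In these terms the conjecture amounts to \emph{uniformizing} this carpet convergence action.

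I would first reduce the carpet problem to a sphere problem by passing to the double. By Corollary \ref{cor:sphere} the double $G_\de$ has $\pa_D(G_\de,\ca P)\simeq S^2$; the genus-$\geq 2$ peripherals are absorbed as internal amalgamating subgroups and only the torus peripherals survive, so $G_\de$ is relatively hyperbolic relative to a family of copies of $\Z^2$ with Bowditch boundary $S^2$. This is exactly the \emph{cusped} relative Cannon conjecture. A solution there would produce a discrete, faithful, geometrically finite action of $G_\de$ on $\HHH$ by isometries, torsion-free since $G$ (hence $G_\de$) is, whose quotient $\HHH/G_\de$ is a finite-volume hyperbolic $3$-manifold with cusps corresponding to the $\Z^2$'s. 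The amalgamating surface subgroups would then be realized by totally geodesic incompressible surfaces; cutting $\HHH/G_\de$ along these recovers $M=\HHH/G$ as a finite-volume hyperbolic $3$-manifold with totally geodesic boundary along the cut surfaces and cusps along the tori, and the peripheral identification asserted in the statement is read off from which subgroups become parabolic versus which stabilize the doubling surfaces.

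The heart of the matter is the uniformization step for $G_\de$ acting on $S^2$: one must show that the geometrically finite convergence action coming from the Bowditch boundary is topologically conjugate to an action by Möbius transformations. Granting a measurable invariant conformal structure, Sullivan--Tukia rigidity would conjugate the action into $\Isom(\HHH)$ and complete the argument; the real content is thus to \emph{produce} such a conformal structure, by Cannon's combinatorial Riemann mapping theorem or a relatively hyperbolic adaptation of it.

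This last step is the main obstacle, and it is genuinely open: taking $\ca P=\vn$, the statement specializes to the classical Cannon conjecture for a torsion-free hyperbolic group with $\pa G\simeq S^2$, so any proof of the relative conjecture proves the absolute one as well. A realistic near-term target is therefore twofold: (i) to make the reductions above unconditional---verifying that doubling and cutting introduce no new geometric obstructions and that the resulting $S^2$-action is a uniform convergence action in the closed case and geometrically finite in the cusped case---thereby establishing cleanly that the relative conjecture is a formal consequence of the cusped (and ultimately closed) Cannon conjecture; and (ii) to settle special cases where the conformal structure can be constructed by hand, for instance when $G$ is already known to be a $3$-manifold group and Thurston--Perelman geometrization applies.
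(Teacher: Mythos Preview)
The statement is a conjecture; the paper does not prove it and does not claim to. What the paper does prove is Theorem~\ref{thm:wall}: the relative Cannon conjecture follows from the Wall conjecture. That argument (\S\ref{s:wall}) runs along a different track from yours: from Corollary~\ref{cor:duality} the double $G_\de$ is $\PD(3)$; Wall supplies a closed aspherical $3$-manifold $M$ with $\pi_1(M)=G_\de$; Scott's compact core produces a compact $N$ with $\pi_1(N)=G$; uniqueness of the $\PD(3)$-pair structure \cite{kk_duality} identifies the boundary subgroups of $N$ with $\ca P$; and Thurston-style hyperbolization via \cite{morgan-uniformization} puts a finite-volume hyperbolic metric with totally geodesic boundary on $N_0$. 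No uniformization of a sphere action enters.

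Your proposal is a different conditional reduction---to the Cannon conjecture rather than Wall---and the paper notes such a route exists (it cites \cite{GMS} for $\Z^2$ peripherals). The outline is reasonable, but one step is wrong as written. You assert that after doubling, ``$G_\de$ is relatively hyperbolic relative to a family of copies of $\Z^2$ with Bowditch boundary $S^2$.'' This fails whenever $\ca P$ contains a torus. In the double, \emph{all} peripherals---tori and higher-genus surfaces alike---are edge groups, and Dahmani's combination theorem keeps all of them in the peripheral structure of $(G_\de,\ca P)$; none is ``absorbed.'' You may legitimately drop the hyperbolic surface groups to obtain $(G_\de,\ca P')$ with $\ca P'$ the tori, but then $\pa_B(G_\de,\ca P')$ is obtained from $\pa_D(G_\de,\ca P)\simeq S^2$ by collapsing the $\Z^2$-peripheral circles to points, and collapsing a null family of disjoint circles in $S^2$ yields a tree of $2$-spheres, not a sphere. (Concretely: if $M$ is hyperbolic with one cusp and one totally geodesic boundary component, the double contains an essential torus and $\pa_B(G_\de,\{\Z^2\})$ has cut points.) Your reduction does go through when $\ca P$ contains no tori, since then $G_\de$ is word-hyperbolic with $\pa G_\de\simeq S^2$ and you land on the closed Cannon conjecture; to handle tori you would need either to double only along the hyperbolic peripherals, or to accept the tree of spheres and uniformize each vertex piece separately before regluing.
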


\begin{theorem} \label{thm:wall}If the Wall conjecture is true, then the relative Cannon conjecture is true.  
\end{theorem}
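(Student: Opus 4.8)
\emph{Proof sketch.} The plan is to promote the double $G_\de$ to a genuine closed aspherical $3$-manifold using the Wall conjecture, cut that manifold open along surfaces realizing the doubling splitting to recover $G$ as the fundamental group of a compact aspherical $3$-manifold with boundary, and then apply Thurston's hyperbolization theorem, using relative hyperbolicity to verify its hypotheses. First, Corollary~\ref{cor:duality} applies since $(G,\ca P)$ is torsion-free, relatively hyperbolic, and satisfies $\pa_B(G,\ca P)\simeq S^2$, so $(G,\ca P)$ is a $\PD(3)$ pair: each $P\in\ca P$ is a closed surface group, and the double $G_\de$ is an oriented $\PD(3)$ group. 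Granting the Wall conjecture, $G_\de\cong\pi_1(N)$ for a closed orientable aspherical $3$-manifold $N$.

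Next I would realize the doubling decomposition inside $N$. Writing $\ca P=\{P_1,\dots,P_k\}$, the double presents $G_\de$ as the fundamental group of a graph of groups with two vertex groups isomorphic to $G$ and edge groups $P_1,\dots,P_k$. Using least-area surface theory (Freedman--Hass--Scott), together with innermost-curve arguments to arrange disjointness, this splitting should be carried by a disjoint system of two-sided embedded incompressible surfaces $\Si_1,\dots,\Si_k\sbs N$ with $\pi_1(\Si_i)$ conjugate to $P_i$. Cutting $N$ along $\bigcup_i\Si_i$ produces two compact pieces $M_1,M_2$, each irreducible with nonempty incompressible boundary and with $\pi_1\cong G$ compatibly with the peripheral structure; since each is irreducible with infinite fundamental group, each $M_j$ is Haken and aspherical. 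By Waldhausen's theorem $M_1\cong M_2=:\ov M$, so $N$ is the double of $\ov M$, and $G\cong\pi_1(\ov M)$ with $\ca P$ the conjugacy classes of the $\pi_1(\Si_i)$.

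Then I would verify that $\ov M$ is simple, so that Thurston's hyperbolization theorem for Haken manifolds applies. Relative hyperbolicity of $(G,\ca P)$ gives three facts. First, every $\Z^2\le G$ is conjugate into some $P_i$, which is then $\cong\Z^2$; by standard $3$-manifold topology this forces every essential torus in $\ov M$ to be boundary-parallel, so $\ov M$ is atoroidal. Second, since $G$ is torsion-free, no non-elementary subgroup of $G$ has nontrivial center, which rules out Seifert-fibered pieces with hyperbolic base orbifold in the characteristic submanifold. Third, the peripheral subgroups are almost malnormal, so the core curve of any essential annulus---being conjugate into a peripheral subgroup from each of its two ends---must have both ends on the same $\Si_i$, where the two boundary curves represent conjugate, hence isotopic, simple closed curves, forcing the annulus to be boundary-parallel (an annulus with an end on a torus component is permitted, belonging to a cusp). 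Hence the characteristic submanifold of $\ov M$ is trivial and $\ov M$ is (pared) acylindrical. Thurston's theorem then puts a complete finite-volume hyperbolic metric on $\ov M$ in which the torus components of $\pa\ov M$ become cusps and the remaining (higher-genus) components become totally geodesic boundary; this exhibits $G$ as the fundamental group of a finite-volume hyperbolic manifold $M$ with $\ca P$ the collection of cusp and geodesic-boundary subgroups, which is the relative Cannon conjecture.

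The step I expect to be the main obstacle is the geometric realization: the Wall conjecture furnishes $N$ only as an abstract aspherical manifold, and one must extract from it an embedded surface system inducing precisely the doubling splitting---not some further refinement---and identify the two complementary submanifolds with $G$. A secondary delicate point is the bookkeeping for essential annuli and tori, in particular the correct treatment of annuli and tori meeting the torus boundary components.
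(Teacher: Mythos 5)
Your opening and closing moves match the paper: Corollary~\ref{cor:duality} plus the Wall conjecture give a closed aspherical $3$-manifold $N$ with $\pi_1(N)\cong G_\de$, and the endgame (atoroidality from ``every $\Z\times\Z$ is peripheral,'' acylindricity from malnormality of the $P_i$, then hyperbolization with cusps and totally geodesic boundary) is essentially the paper's, which doubles along the non-torus boundary and cites \cite[Thm.~B]{morgan-uniformization} together with Mostow rigidity. The difference, and the problem, is the middle step.

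You propose to realize the doubling splitting of $G_\de$ by a disjoint system of embedded incompressible surfaces $\Si_1,\ld,\Si_k\sbs N$ and cut. This is a genuine gap, not just a technicality: Freedman--Hass--Scott least-area theory minimizes and disentangles surfaces that are \emph{already} embedded (or controls intersections of least-area representatives); it does not convert an abstract graph-of-groups splitting over surface subgroups into an embedded surface system. Dunwoody-type resolutions produce embedded tracks/surfaces, but the splitting they induce may be a proper refinement of the given one, which is exactly the failure mode you flag yourself. You would also then need to know that the two complementary pieces each have fundamental group $G$ \emph{with the correct peripheral structure}, which is not automatic from an abstract identification $\pi_1(M_j)\cong G$. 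The paper sidesteps all of this: it passes to the cover $M'\ra M$ corresponding to $G<G_\de$, applies Scott's compact core theorem to get a compact $N\sbs M'$ with $\pi_1(N)\cong G$, and then identifies the boundary subgroups of $N$ with the $P_i$ via Kapovich--Kleiner's uniqueness of $\PD(3)$-pair structures \cite[Thm.~1.5]{kk_duality}. That uniqueness theorem requires that the $P_i$ do not coarsely separate $G$, which the paper deduces from the fact that the peripheral circles of the Sierpinski carpet $\pa_D(G,\ca P)$ do not separate --- i.e., this is precisely where Theorem~\ref{thm:sierpinski} is used. Your proposal never invokes the Sierpinski carpet structure at all, which is a symptom of the missing identification step. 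To repair your argument you would either need a realization theorem for splittings of closed aspherical $3$-manifold groups over closed surface subgroups that produces exactly the given splitting, or you should replace the cutting step with the cover-plus-compact-core argument and the $\PD(3)$-pair uniqueness input.
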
 

Compare with \cite{kk}, which is similar. A slightly different theorem that the Cannon conjecture implies the relative Cannon conjecture when the peripheral subgroups are $\Z^2$, is given in \cite{GMS} with a completely different proof. 
Our version follows from Theorem \ref{thm:sierpinski}, Corollary \ref{cor:sphere}, and a result of Kapovich and Kleiner on the uniqueness of peripheral structures \cite[Theorem 1.5]{kk_duality}. Martin and Skora \cite{martin-skora} conjecture that convergence groups can be realized as Kleinian groups, which encompasses the Cannon and relative Cannon conjectures. 

We now explain the rough outline for Theorem \ref{thm:sierpinski}. 
\begin{enumerate}
\item In general there is a continuous surjection $c:\pa_D(G,\ca P)\ra\pa_B(G,\ca P)$. We collect some facts about the topology on $\pa_D(G,\ca P)$ and this map in Proposition \ref{prop:quotient}. In the case of Theorem \ref{thm:sierpinski}, we have a map $c:\pa_D(G,\ca P)\ra S^2$ such that $c^{-1}(z)$ is either a single point or a circle for each $z\in S^2$.
\item In Lemma \ref{lem:sierpinski} we identify conditions on a map $X\ra S^2$ that are sufficient to conclude that $X$ is a Sierpinski carpet. This gives a characterization of the Sierpinski carpet, which may be of independent interest.
\item We verify that the conditions of Lemma \ref{lem:sierpinski} are satisfied for the map $c:\pa_D(G,\ca P)\ra S^2$. One of the difficult parts is to show that if $\pa_B(G,\ca P)\simeq S^2$, then if $\pa_D(G,\ca P)_P$ is the quotient of $\pa_D(G,\ca P)$ obtained by collapsing all but one of the peripheral circles to points, then $\pa_D(G,\ca P)_P$ is homeomorphic to the closed disk. 
\end{enumerate}

{\it Remark.} It would be interesting to know a version of Theorem \ref{thm:sierpinski} when $\pa_B(G,\ca P)\simeq S^{n-1}$ for $n>3$, i.e.\ that in this case $\pa_D(G,\ca P)$ is an $(n-2)$-dimensional Sierpinski carpet. The methods of this paper show that this is true if one knows that each $P\in\ca P$ admits a $\ca Z$-boundary $\pa P\simeq S^{n-2}$. When $n=3$ this is automatic because the peripheral subgroups are always surface groups. 

{\it Section outline.} In \S\ref{s:Dahmani} we collect some facts about the Bowditch and Dahmani boundaries of a relatively hyperbolic group and their relation. In \S\ref{S:proofof2} we prove Theorem \ref{thm:sierpinski}, and in \S\ref{sec:cor} we prove Corollaries \ref{cor:sphere} and \ref{cor:duality} and Theorems \ref{th:converse} and \ref{thm:wall}.

{\it Acknowledgements.}  The authors thank Francois Dahmani for helpful conversations about his work, and thank Craig Guilbault for help with $\ca Z$-structures. The authors are grateful to the referee for carefully reading the paper, catching errors, and offering helpful observations that significantly improved the paper. The authors also acknowledge support from NSF grants DMS 1502794 and DMS 1709964, respectively. 

\section{Relatively hyperbolic groups and their boundaries} \label{s:Dahmani}

We will assume throughout that $G$ is finitely generated and thus so are the peripheral subgroups \cite{osin}. 

There are different notions of boundary for a relatively hyperbolic group. The most general definition is due to Bowditch \cite{bowditch}. Another boundary was defined by Dahmani \cite{dahmani-boundary} in the case when each peripheral subgroup \emph{admits a boundary}, i.e.\ for each $P\in\ca P$ there is a space $\pa P$ so that $P\cup\pa P$ is compact, metrizable, and $P\sbs P\cup\pa P$ is dense. In this section we describe the Bowditch and Dahmani boundaries. Our description of Dahmani's boundary differs slightly from that in \cite{dahmani-boundary} because we use the coned-off Cayley graph instead of the collapsed Cayley graph. This is required in order to allow $\ca P$ to contain more than one conjugacy class, as discussed in \cite[\S6]{dahmani-boundary}. Everything in this section will be done for general relatively hyperbolic groups, although the case with one conjugacy class of peripheral subgroups is the most rigorous case in \cite{dahmani-boundary}. In the next section we will specialize to the case $\pa_B(G,\ca P)\simeq S^2$.

\subsection{Relatively hyperbolic groups and the Bowditch boundary}\label{sec:bowditch-boundary}

Below $G$ is a group and $\ca P$ is a collection of subgroups of $G$ that consists of a finite number of conjugacy classes of $G$. Some authors use $\ca P$ to refer to a collection of conjugacy representatives, but we do not use this convention. This causes a minor notational conflict since the notion of PD($n$) pair (as discussed in the previous section) is reserved for a group with respect to a \emph{finite} collection of subgroups. However, this should not cause any confusion as we can pick any set $\{P_1,\ldots,P_d\}$ of conjugacy class representatives to be this finite collection. 

For a subgroup $P<G$ and $a\in G$, we denote ${}^aP:=aPa^{-1}$ for the (left) action of $G$ by conjugation.

{\it The coned--off Cayley graph.} Fix a relatively hyperbolic group $(G,\ca P)$, and let $P_1,\ld, P_d$ be representatives for the conjugacy classes in $\mathcal{P}$. Let $S$ be a generating set for $G$ that contains generating sets $S_i$ for each $P_i$.  Then the Cayley graph $\Ga(G)=\Ga(G,S)$ naturally contains the Cayley graphs $\Ga(P_i,S_i)$ for each $i=1,\ld,d$. If $P\in\ca P$ and $P=aP_ia^{-1}$, then we denote by $\Ga(P)\sbs\Ga(G)$ the subgraph $a\>\Ga(P_i,S_i)$; note that $\Ga(P)$ is isomorphic to a Cayley graph for $P$ since $\Ga({}^aP_i,{}^aS_i)\simeq\Ga(P_i,S_i)\simeq a\>\Ga(P_i,S_i)$. We form the {\it coned off Cayley graph} $\hat\Ga=\hat \Gamma(G, \mathcal{P}, S)$ by adding a vertex $*_{P}$ for each $P\in\ca P$ and adding edges of length $1/2$ from $*_{P}$ to each vertex of $\Ga(P)\sbs\Ga(G)$.

An oriented path $\gamma$ in $\hat\Ga$ is said to {\it penetrate} $P\in\ca P$ if it passes through the cone point $*_{P}$; its \emph{entering} and \emph{exiting} vertices are the vertices immediately before and after $*_{P}$ on $\gamma$. The path is {\it without backtracking} if once it penetrates $P\in\ca P$, it does not penetrate $P$ again. 

\begin{definition} The triple $(G, \mathcal{P} ,S)$ is said to have {\it bounded coset penetration} if for each $\lambda\ge1$, there is a constant $a=a(\lambda)$ such that if $\gamma$ and $\gamma'$ are $(\lambda,0)$ quasi-geodesics  without backtracking in $\hat \Gamma$ and with the same endpoints, then 
\begin{enumerate} 
\item[(i)] if $\gamma$ penetrates some $P\in\ca P$, but $\gamma'$ does not, then the distance between the entering and exiting vertices of $\gamma$ in $\Ga(P)$ is at most $a$; and 
\item[(ii)] if $\gamma$ and $\gamma'$ both penetrate $P$, then the distance between the entering vertices of $\gamma$ and $\gamma'$ in $\Ga(P)$ is at most $a$, and similarly for the exiting vertices. 
\end{enumerate}  
\end{definition} 

{\it Relative hyperbolicity and Bowditch boundary.} The pair $(G, \mathcal{P})$ is called \emph{relatively hyperbolic} when $\hat \Gamma(G, \mathcal{P}, S)$ is hyperbolic and satisfies bounded coset penetration \cite{farb-rel-hyp}. To equip $(G,\ca P)$ with a boundary, Bowditch \cite{bowditch} used an equivalent definition: $(G,\ca P)$ is relatively hyperbolic if there exists a \emph{fine} $\de$-hyperbolic graph $K$ with a $G$-action so that there are finitely many orbits of edges and $\ca P$ is the set of infinite vertex stabilizers. A graph is {\it fine} if each edge is in finitely many cycles of length $n$, for each $n$. Then the Bowditch boundary is defined as $\pa_B(G,\ca P):=\pa K\cup V_\infty(K)$, where $V_\infty(K)\sbs V(K)$ is the set of vertices of $K$ with infinite valence. If the $G$-action on $K$ is geometric, then $\ca P=\vn$ and this recovers the Gromov boundary of a hyperbolic group $\pa G=\pa K$. 

An alternate definition of relatively hyperbolic is that $G$ acts geometrically finitely on a proper geodesic metric space \cite[Definition 1]{bowditch}. In particular, this implies that for each $P\in\ca P$, the action of $P$ on $\pa_B(G,\ca P)\setminus\{*_P\}$ is properly discontinuous and cocompact. For the many equivalent notions of relative hyperbolicity, see \cite{hruskadef}. 

If $(G,\ca P)$ is relatively hyperbolic, then the coned-off Cayley graph $\hat\Ga$ is a fine hyperbolic graph \cite{Dahmanithesis}. In this case $V_\infty(\hat\Ga)\simeq\ca P$, so we can describe $\pa_B(G,\ca P)$ as
\begin{equation}\label{eqn:bowditch-boundary}\pa_B(G,\ca P)=\pa\hat\Ga\cup\big(\cup_{P\in\ca P} \{*_{P}\}\big).\end{equation}

{\it Topology on the Bowditch boundary.} For a finite subset $A\sbs V(\hat\Ga)$ and $v\in \pa\hat\Ga\cup V(\hat\Ga)$, let $M(v,A)$ denote the collection of points $w$ in $\pa_B(G,\ca P)$ so that there exists a geodesic from $v$ to $w$ that avoids $A$.  This forms a basis for the topology on $\partial_B(G, \mathcal{P})$, see \cite[Section 8]{bowditch}.  In particular, a subset $U\sbs\pa_B(G,\ca P)$ is open if for each $v\in U$, there exists a finite set $A\sbs V(\hat\Ga)$ so that $M(v,A)\sbs U$. A different basis for the topology is the collection of the sets $M_{(\la,c)}(v,A)$ of points connected to $v$ by a $(\la,c)$ quasi-geodesic that avoids $A$ (see \cite{bowditch} and \cite[\S3]{tran}).

\subsection{$\mathcal{Z}$-structures on groups} 

Before we discuss the Dahmani boundary it will be useful to have the notion of a \emph{$\ca Z$-structure} on a group \cite{bestvina_localhomology}. This concept generalizes both (i) a $\CAT(0)$-metric space with its visual boundary, and (ii) the Rips complex of a hyperbolic group with its Gromov boundary \cite{bestvina-mess}.  See \cite{ancel-guilbault} for more about $\ca Z$-structures. 

\begin{definition}[\cite{bestvina_localhomology}] A \emph{$\mathcal{Z}$-structure} on a torsion-free group $\Gamma$ is a pair of spaces $(\bar X, Z)$ such that 
\begin{enumerate} 
\item The space $\bar X$ is a Euclidean retract, i.e.\ $\bar X$ is compact, metrizable, finite dimensional, contractible, and locally contractible.
\item The subspace $Z\sbs\bar X$ is a $\ca Z$-set, i.e.\ for all $\epsilon$, there exists a map $f_\epsilon: \bar X \rightarrow \bar X \setminus Z$ that is $\epsilon$ close to the identity.
\item The space $\bar X \setminus Z$ admits a proper, cocompact $\Gamma$ action.
\item For any compact $K$ in $\bar X \setminus Z$, and any open cover $\mathcal{U}$ of $\bar{X}$, each translate $gK$ is contained in some $U_g \in \mathcal{U}$ for all but finitely many $g \in \Gamma$. 
\end{enumerate} 
\end{definition}

If $(\bar X,Z)$ is a $\ca Z$-structure on $\Ga$, then the space $Z$ is called a \emph{$\ca Z$-boundary} of $\Ga$. In general, a $\ca Z$-boundary is not unique; however,  the following theorem gives a uniqueness result for the $\ca Z$-boundary of a $\PD(n)$ group when $n\le3$. 

\begin{theorem} [\cite{bestvina-mess}]\label{thm:bestvina} Let $G$ be a torsion-free group that admits a $\mathcal{Z}$-structure $(\bar X, Z)$.  Then $G$ is a $\PD(2)$ or a $\PD(3)$ group, respectively, exactly when $Z \simeq S^1$, or $Z \simeq S^2$, respectively.  \end{theorem}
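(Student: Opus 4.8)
The plan is to deduce Theorem~\ref{thm:bestvina} from the cohomological analysis of $\ca Z$-structures in \cite{bestvina-mess}, together with the recognition of low-dimensional homology manifolds. First I would record the homological content of a $\ca Z$-structure $(\bar X,Z)$. Set $X=\bar X\sm Z$. By conditions (3)--(4) the torsion-free group $G$ acts simplicially, properly, and cocompactly on the locally finite complex $X$, hence freely; since $Z$ is a $\ca Z$-set in the contractible compactum $\bar X$, the inclusion $X\hra\bar X$ is a homotopy equivalence, so $X$ is contractible and $X/G$ is a finite $K(G,1)$. Thus $G$ has type $F$ and $H^k(G;\Z G)\cong H^k_c(X;\Z)$. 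Compactness of $\bar X$ identifies $H^k_c(X;\Z)\cong\check H^k(\bar X,Z;\Z)$, and the long exact sequence of the pair together with the acyclicity of $\bar X$ yields
\[
\widetilde{\check H}^{\,j}(Z;\Z)\;\cong\;H^{j+1}(G;\Z G)\qquad(j\ge 0),
\]
where $\widetilde{\check H}$ denotes reduced Čech cohomology. This isomorphism is the heart of the matter, and it is where all four parts of the definition of a $\ca Z$-structure get used.

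Next I would translate Poincar\'e duality into a condition on $Z$. By the Bieri--Eckmann criterion, a torsion-free group $G$ of type $F$ is a $\PD(n)$ group exactly when $H^k(G;\Z G)=0$ for $k\ne n$ and $H^n(G;\Z G)\cong\Z$ (and, for the \emph{oriented} notion of duality used here, the $G$-action on $H^n(G;\Z G)$ is trivial). Feeding this into the displayed isomorphism, $G$ is a $\PD(n)$ group precisely when $Z$ has the reduced integral Čech cohomology of $S^{n-1}$; in particular $Z$ is connected once $n\ge2$, consistently with a $\PD(n)$ group being one-ended for $n\ge2$. One implication of the theorem is then immediate: if $Z\simeq S^1$ or $Z\simeq S^2$, it certainly has the Čech cohomology of that sphere, so $G$ is $\PD(2)$ or $\PD(3)$.

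For the converse I would show that a $\ca Z$-boundary with the Čech cohomology of $S^{n-1}$ is homeomorphic to $S^{n-1}$ provided $n\le3$, in three steps. (a) Running the $\ca Z$-set argument relative to a point $z\in Z$, Bestvina--Mess compute the local cohomology $\check H^*(Z,Z\sm\{z\})$ and find it agrees with that of $(\R^{n-1},\R^{n-1}\sm\{0\})$ at every $z$; hence $Z$ is a compact, finite-dimensional, (generalized) homology $(n-1)$-manifold. (b) For $n-1\le2$ a sufficiently tame homology manifold is a topological manifold, so $Z$ is a closed $(n-1)$-manifold; this is exactly the step at which the hypothesis $n\le3$ is forced. (c) Finally, a closed connected $1$-manifold is $S^1$, and a closed connected $2$-manifold with $\widetilde{\check H}^{\,1}=0$ is $S^2$ (it is orientable since then $\check H^2\cong\Z$ rather than $\Z/2$, and of genus $0$ since $\check H^1=0$). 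I expect step (b) to be the main obstacle: one must check that $Z$ is an ANR, or otherwise verify the disjoint-disks-type hypotheses of the classical recognition theorems, and it is precisely the failure of this in dimensions $\ge3$ that prevents the statement from extending to $\PD(n)$ groups with $n\ge4$.
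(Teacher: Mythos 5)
Your proposal is correct and follows essentially the same route as the paper, which simply cites the proof of \cite[Cor.\ 1.3]{bestvina-mess} (the isomorphism $\widetilde{\check H}^{\,j}(Z;\Z G)\cong H^{j+1}(G;\Z G)$ coming from the $\ca Z$-set structure, combined with the Bieri--Eckmann characterization of $\PD(n)$ groups) together with the fact that a homology manifold which is a homology $k$-sphere is homeomorphic to $S^k$ for $k\le 2$ \cite[Rmk.\ 2.9]{bestvina_localhomology}. Your steps (a)--(c) are exactly the content of those citations, including the correct identification of the homology-manifold recognition step as the reason the statement is restricted to $n\le 3$.
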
 

Theorem \ref{thm:bestvina} follows directly from the proof of \cite[Cor.\ 1.3]{bestvina-mess}, together with the fact that a homology manifold that is a homology $k$-sphere is homeomorphic to $S^k$ when $k\le 2$ \cite[Rmk.\ 2.9]{bestvina_localhomology}. See also \cite[Thm 2.8]{bestvina_localhomology} for a generalization. 

\subsection{The Dahmani boundary and its topology} \label{sec:dahmani-boundary}

Fix a relatively hyperbolic group $(G,\ca P)$. Assume that each $P\in\ca P$ admits a $\ca Z$-boundary $\pa P$. As a set, the Dahmani boundary is 
\begin{equation}\label{eqn:dahmani-boundary}\pa_D(G,\ca P)=\pa\hat\Ga \cup\big(\cup_{P\in\ca P}\pa P\big).\end{equation}

If $P$ acts on $\pa P$ for each $P\in\ca P$ and if $\pa P=\pa P'$ whenever $P$ and $P'$ are conjugate, then $G$ naturally acts on $\cup_{P\in\ca P}\pa P$, and so $G$ acts on $\pa_D(G,\ca P)$. 

There is a natural map $c:\pa_D(G,\ca P)\ra\pa_B(G,\ca P)$ that is the identity on $\pa\hat\Ga$ and sends $\pa P$ to $*_{P}$. This map is studied more in \S\ref{sec:compare} and will be important in \S\ref{S:proofof2}. 

{\it Topology on the Dahmani boundary.} The topology on $\pa_D(G,\ca P)$ has a basis consisting of two types of open sets (\ref{eqn:open1}) and (\ref{eqn:open2}) below. The first type is a neighborhood basis $\{U_x'\}$ of points $x$ in $\pa \hat\Ga $.  For $x\in\pa \hat\Ga$, and for an open set $U_x\sbs\pa_B(G,\ca P)$ containing $x$, define $U_x'\sbs\pa_D(G,\ca P)$ by 
\begin{equation} \label{eqn:open1} 
U'_x =  (U_x \cap \pa \hat\Ga) \cup\big(\cup_{*_P\in U_x}\pa P\big).
\end{equation} 
The second type is a neighborhood basis about points $x \in \pa P$.  To describe it we first introduce some terminology. 

\begin{definition} 
For $P\in\ca P$ and a vertex $v\in\Ga(P)\sbs\Ga(G)$, the \emph{shadow of $v$ with respect to $P$}, denoted $\Sh(v,P)$, is the set of endpoints in $\pa\hat\Ga\cup\hat\Ga$ of (non-backtracking) geodesic arcs and rays beginning at $v$ that immediately leave $\Ga(P)$ (and do not pass through $*_{P}$). 

Furthermore, we define $\Sh_B(v,P)$ as the intersection of $\Sh(v,P)$ with $\pa_B(G,\ca P)\sbs\pa\hat\Ga\cup\hat\Ga$, and we define $\Sh_D(v,P)\sbs\pa_D(G,\ca P)$ as the preimage of $\Sh_B(v,P)$ under $c$. Note that by definition, $\Sh_B(v,P)\sbs\pa_B(G,\ca P)\sm\{*_{P}\}$. 
\end{definition} 

\begin{observation}  \label{obs:covers} For each $P\in\ca P$, 
\[\bigcup_{v \in \Ga(P)}  Sh_B(v,P) = \pa_B(G,\ca P)\sm\{*_{P}\}\>\>\text{ and so }\>\>\bigcup_{v \in \Ga(P)} Sh_D(v,P) = \pa_D(G) \setminus \pa P.\]
\end{observation}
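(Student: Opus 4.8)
The inclusion $\bigcup_{v\in\Ga(P)}\Sh_B(v,P)\sbs\pa_B(G,\ca P)\sm\{*_P\}$ is immediate, since each $\Sh_B(v,P)\sbs\pa_B(G,\ca P)\sm\{*_P\}$ by the note following the definition of the shadow. So the plan is to prove the reverse inclusion: every $w\in\pa_B(G,\ca P)\sm\{*_P\}$ lies in $\Sh_B(v,P)$ for some $v\in\Ga(P)$. The idea is to read off such a $v$ from a geodesic emanating from the cone point $*_P$, exploiting that $*_P$ is joined to every vertex of $\Ga(P)$ by an edge of length $1/2$.

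First I would use that, since $\hat\Ga$ is a fine $\de$-hyperbolic graph \cite{Dahmanithesis}, any two distinct points of $\pa\hat\Ga\cup V(\hat\Ga)$ are joined by a geodesic --- an arc when the terminal point is a cone vertex $*_{P'}$, a ray when it lies in $\pa\hat\Ga$ (see \cite[Section 8]{bowditch}). Fix $w\in\pa_B(G,\ca P)\sm\{*_P\}$ and let $\ga$ be such a geodesic from $*_P$ to $w$, with consecutive vertices $*_P,v_0,v_1,\ld$. Since every edge incident to $*_P$ terminates at a vertex of $\Ga(P)$, we have $v_0\in\Ga(P)$ (and $v_0\neq w$, as $w$ is not an ordinary Cayley vertex). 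The key step is the observation that $v_1\notin\Ga(P)$: if $v_1\in\Ga(P)$, then $*_P$ is joined to $v_1$ by an edge of length $1/2$, so $d_{\hat\Ga}(*_P,v_1)\le 1/2$, whereas the initial segment $*_P\to v_0\to v_1$ of $\ga$ has length $1/2+1=3/2$, contradicting that $\ga$ is geodesic. (If $v_1$ is itself a cone point $*_{P'}$ then $v_1\notin\Ga(P)$ trivially.)

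Consequently the subgeodesic $\ga'$ of $\ga$ from $v_0$ to $w$ is a non-backtracking geodesic arc or ray that begins at $v_0\in\Ga(P)$, immediately leaves $\Ga(P)$ (its second vertex $v_1$ is not in $\Ga(P)$), and does not pass through $*_P$ (a geodesic issuing from $*_P$ does not return to it). Hence $w$ is an endpoint of a path of the required type, so $w\in\Sh(v_0,P)$, and since $w\in\pa_B(G,\ca P)$ we get $w\in\Sh_B(v_0,P)$. This proves the first displayed equality. The second follows formally: by definition $\Sh_D(v,P)=c^{-1}(\Sh_B(v,P))$, and $c$ is a surjection that is the identity on $\pa\hat\Ga$ and collapses each $\pa P'$ to $*_{P'}$, so $c^{-1}(*_P)=\pa P$. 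Taking $c$-preimages of the first equality and using that preimages commute with unions,
\[\bigcup_{v\in\Ga(P)}\Sh_D(v,P)=c^{-1}\Big(\bigcup_{v\in\Ga(P)}\Sh_B(v,P)\Big)=c^{-1}\big(\pa_B(G,\ca P)\sm\{*_P\}\big)=\pa_D(G,\ca P)\sm\pa P.\]
The only non-formal ingredient --- and the step I expect to lean on most --- is the existence and visibility of geodesics in the fine hyperbolic graph $\hat\Ga$, for which I would appeal to \cite{bowditch}; everything else reduces to the elementary length comparison forcing a geodesic out of $\Ga(P)$ after a single step.
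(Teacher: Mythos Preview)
Your argument is correct and is precisely the verification the paper leaves implicit: the statement is labeled an \emph{Observation} and given no proof, so you are supplying the routine check (take a geodesic from $*_P$ to $w$, observe its first vertex $v_0$ lies in $\Ga(P)$ and the next vertex must leave $\Ga(P)$ by a length comparison, then pull back along $c$). There is nothing to compare---you have written out exactly what the authors regard as immediate from the definitions.
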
 

We now define a neighborhood basis $\{U_x'\}$ for $x\in \pa P$. For $x\in \pa P$ and a neighborhood $U_x$ of $x$ in $P\cup\pa P$, define $U_x'\sbs\pa_D(G,\ca P)$ by 
\begin{equation} \label{eqn:open2} 
U_x'= (U_x \cap \partial P) \cup\big(\cup_{v\in U_x}\Sh_D(v,P)\big) 
\end{equation} 

We recap the above discussion. 

\begin{definition} \cite[Defn 3.3]{dahmani-boundary} \label{def:Dahmani} Let $(G, \ca P) $ be a relatively hyperbolic group. Assuming each $P\in\ca P$ admits a boundary the {\it Dahmani boundary}, $\pa_D(G, \ca P)$ is the set (\ref{eqn:dahmani-boundary}) 
with topology generated by open sets of the form (\ref{eqn:open1}) and (\ref{eqn:open2}). \end{definition} 

Dahmani \cite[Thm 3.1]{dahmani-boundary} proves that $\pa_D(G,\ca P)$ is compact and metrizable. 

{\it Remark.} There is a slight difference between our definition of the topology on $\pa_D(G,\ca P)$ and the definition in \cite{dahmani-boundary}. Instead of using endpoints of \emph{geodesics} (as in our definition of $\Sh(v,P)$), Dahmani uses endpoints of quasi-geodesics that are geodesics outside of a compact set. However, these give the same topology. One way to see this is to note that $\Sh_B(v,P)$ has the form $M(v,A)$ (c.f.\ \S\ref{sec:bowditch-boundary}) where $A$ is the finite set of vertices in $\Ga(P)\cup\{*_P\}$ that are adjacent to $v$.  (Note that the distance between any two vertices in $P$ is 1 in $\hat\Gamma$.) Bowditch \cite[\S8]{bowditch} proves that this gives a basis for the topology on $\pa_B(G, \mathcal{P})$. Furthermore, Bowditch shows that this is equivalent to the topology on $\partial_B(G, \mathcal{P})$ defined using $M_{(\lambda, c)}(v, A)$, defined above. It follows that the topology we defined is equivalent to Dahmani's definition. 
\subsection{Comparing the Bowditch and Dahmani boundaries}\label{sec:compare}

Consider the {\it collapsing map} 
\begin{equation}\label{eqn:collapse-map}c:\pa_D(G,\ca P)\ra\pa_B(G,\ca P)\end{equation} that sends each peripheral boundary $\pa P$ to the corresponding point $*_P$ and is the identity on $\pa\hat \Gamma$. 

\begin{proposition}\label{prop:quotient}
Let $(G, \mathcal{P})$ be a relatively hyperbolic group. Assume that each $P\in\ca P$ admits a boundary $\pa P$. 
\begin{enumerate}
\item[(i)] For $P\in\ca P$, the inclusion $\pa P\hra\pa_D(G,\ca P)$ is an embedding. 
\item[(ii)] The subset $\bigcup_{P\in\ca P}\pa P\sbs\pa_D(G,\ca P)$ is dense, and $\{\partial P: P\in\ca P\}$ is a null family (i.e.\ for each $r>0$ there are finitely many $P\in\ca P$ with diameter greater than $r$). 
\item[(iii)] The collapsing map $c$ is continuous and $c\rest{}{\pa\hat\Ga}$ is an embedding (i.e.\ a homeomorphism onto its image).
\end{enumerate}
\end{proposition}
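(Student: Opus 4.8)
\textbf{Proof proposal for Proposition \ref{prop:quotient}.}

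The plan is to work directly from the neighborhood bases (\ref{eqn:open1}) and (\ref{eqn:open2}) for $\pa_D(G,\ca P)$ and the basis $M(v,A)$ for $\pa_B(G,\ca P)$ from \S\ref{sec:bowditch-boundary}. For part (i), fix $P\in\ca P$. The inclusion $\pa P\hra\pa_D(G,\ca P)$ is injective by construction (the pieces $\pa P$ and $\pa\hat\Ga$ are disjoint in the set (\ref{eqn:dahmani-boundary}), and distinct peripheral boundaries are disjoint), so the content is that the subspace topology on $\pa P$ agrees with its own topology. Intersecting a basic open set of type (\ref{eqn:open2}) with $\pa P$ recovers exactly $U_x\cap\pa P$, since the shadow terms $\Sh_D(v,P)$ lie in $\pa_D(G,\ca P)\sm\pa P$ by definition; this shows the subspace topology is no finer than the given topology. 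Conversely, each $U_x\cap\pa P$ is, by definition, the $\pa P$-part of the basic set $U_x'$, so the subspace topology is no coarser. Basic sets of type (\ref{eqn:open1}) intersect $\pa P$ either in all of $\pa P$ (if $*_P\in U_x$) or not at all, so they contribute nothing new. Hence $\pa P\hra\pa_D(G,\ca P)$ is an embedding. A symmetric (and easier) argument handles $c\rest{}{\pa\hat\Ga}$ in part (iii): intersecting a type-(\ref{eqn:open1}) set $U_x'$ with $\pa\hat\Ga$ gives $U_x\cap\pa\hat\Ga$, and $\pa\hat\Ga$ is given the subspace topology from $\pa_B(G,\ca P)$ via the basis $M(v,A)$ there; type-(\ref{eqn:open2}) sets intersect $\pa\hat\Ga$ in a union of shadows $\Sh_B(v,P)=M(v,A_v)$, which are open in $\pa_B(G,\ca P)$, so no finer topology appears. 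Injectivity of $c\rest{}{\pa\hat\Ga}$ is immediate since $c$ is the identity there.

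For continuity of $c$ in part (iii), I would check that $c^{-1}$ of a subbasic open set of $\pa_B(G,\ca P)$ is open in $\pa_D(G,\ca P)$. It suffices to take $U_x=M(v,A)$ a basic set in $\pa_B(G,\ca P)$, where $x\in\pa_B(G,\ca P)$. If $x\in\pa\hat\Ga$ then $c^{-1}(M(v,A))$ is precisely a set of the form (\ref{eqn:open1}), hence open. If $x=*_P$ for some $P\in\ca P$, the situation is slightly more delicate: $c^{-1}(M(*_P,A))$ contains all of $\pa P$ together with the part of $\pa\hat\Ga$ visible from $*_P$ avoiding $A$, and one must show this contains a type-(\ref{eqn:open2}) neighborhood of each of its points in $\pa P$. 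This is where one uses that geodesics through $*_P$ from a vertex of $\Ga(P)$ correspond to shadows: for a vertex $v$ of $\Ga(P)$ not in (or far from) $A$, the shadow $\Sh_B(v,P)$ is contained in $M(*_P,A)$ because a geodesic from $*_P$ through $v$ leaving $\Ga(P)$ avoids $A$ once $v$ does. Taking the union over $v$ in a suitable cofinite-in-$\Ga(P)$ set and appealing to the Observation ($\bigcup_v\Sh_B(v,P)=\pa_B(G,\ca P)\sm\{*_P\}$) and to the bounded coset penetration property (to control which vertices of $\Ga(P)$ a fixed geodesic actually uses) exhibits a type-(\ref{eqn:open2}) set inside $c^{-1}(M(*_P,A))$.

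For part (ii), density of $\bigcup_{P\in\ca P}\pa P$, I would show every nonempty basic open set meets some $\pa P$. A type-(\ref{eqn:open2}) set already contains points of $\pa P$ (namely $U_x\cap\pa P\neq\vn$). For a type-(\ref{eqn:open1}) set $U_x'$ with $U_x$ open in $\pa_B(G,\ca P)$: if $*_P\in U_x$ for some $P$ we are done immediately, so assume $U_x\sbs\pa\hat\Ga$; pick $\xi\in U_x\cap\pa\hat\Ga$ and a geodesic ray $\rho$ representing $\xi$. Using fineness of $\hat\Ga$ together with bounded coset penetration, one argues that one can find cone points $*_{P_n}$ (i.e.\ peripheral subgroups $P_n$) with $*_{P_n}$ "converging to" $\xi$ in $\pa_B(G,\ca P)$ — concretely, $*_{P_n}\in U_x$ for $n$ large — whence $\pa P_n\sbs U_x'$ by (\ref{eqn:open1}). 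Here one invokes that cone points are dense in $\pa\hat\Ga\cup V_\infty(\hat\Ga)$, which is standard for relatively hyperbolic groups (the $G$-orbit of any cone point accumulates everywhere on $\pa_B$ by minimality of the action, or directly: any $M(v,A)$ contains a cone point, since from $v$ one can travel out a geodesic meeting a fresh coset). Either way the basic set meets $\bigcup_P\pa P$.

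\textbf{Main obstacle.} The routine bookkeeping is parts (i) and the embedding half of (iii); the genuinely substantive point is continuity of $c$ at the cone points $*_P$ in part (iii). There one must translate between the "avoid a finite vertex set $A$" description of the Bowditch topology and the "union of shadows" description of the Dahmani topology near $\pa P$, and the match-up is exactly where bounded coset penetration (controlling entering/exiting vertices of quasi-geodesics in $\Ga(P)$, so that avoiding $A$ is detected by finitely many shadow-vertices) does the real work. I expect that step to absorb most of the proof.
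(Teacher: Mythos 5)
Your overall strategy -- direct verification against the bases (\ref{eqn:open1}), (\ref{eqn:open2}) and $M(v,A)$ -- is the same as the paper's, and parts (i), (ii), and the embedding half of (iii) go through essentially as you describe (the paper handles the embedding of $\pa\hat\Ga$ by citing Bowditch and Dahmani for the equivalence of both subspace topologies with the standard Gromov topology, whereas you compute intersections of basic sets directly; both work, and your explicit appeal to density of cone points in (ii) is if anything more careful than the paper's one-line justification). The one place you diverge is the step you flag as the ``main obstacle'': continuity of $c$ at a cone point $*_P$. The paper short-circuits this entirely by observing that $c^{-1}\bigl(M(x,A)\bigr)=\bigl(M(x,A)\cap\pa\hat\Ga\bigr)\cup\bigl(\cup_{*_P\in M(x,A)}\pa P\bigr)$ is \emph{literally} a set of the form (\ref{eqn:open1}), hence open by Definition \ref{def:Dahmani} -- no shadows and no bounded coset penetration are needed. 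Your alternative route (exhibiting a type-(\ref{eqn:open2}) neighborhood of each $\xi\in\pa P$ inside $c^{-1}(M(*_P,A))$) amounts to re-proving that type-(\ref{eqn:open1}) sets are open near $\pa P$, which is built into Dahmani's definition; moreover, as sketched it is delicate, since prepending the edge from $*_P$ to $v$ to a geodesic realizing $\Sh_B(v,P)$ need not yield a geodesic, so you would have to pass to the quasi-geodesic basis $M_{(\la,c)}(v,A)$ to make the containment $\Sh_B(v,P)\sbs M(*_P,A)$ precise. So: correct conclusion, same framework, but the step you expected to absorb most of the proof is a definitional one-liner in the paper.
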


\begin{proof}
Both (i) and (ii) follow from the definition of the topology on $\pa_D(G,\ca P)$. The subspace topology on $\pa P\sbs\pa_D(G,\ca P)$ agrees with the standard topology on $\pa P$ by definition of the open sets (\ref{eqn:open2}). Also, $\bigcup_{P\in\ca P}\pa P$ is dense because each of the open sets (\ref{eqn:open1}) and (\ref{eqn:open2}) generating the topology on $\pa_D(G,\ca P)$ contain points of some peripheral boundary.
Finally, $\{\pa P: P\in\ca P\}$ is a null family. This is because for $r>0$, we can cover $\pa_D(G,\ca P)$ by open sets $V_1,\ldots,V_k$ of the form (\ref{eqn:open1}) or (\ref{eqn:open2}), each with diameter at most $r$, by compactness.  Note that by definition for each $V_i$ there is at most one peripheral circle that intersects $V_i$ nontrivially but is not contained in $V_i$. It follows that there are at most $k$ peripheral circles with diameter $ \geq r$.

Next we prove (iii). To show that $c$ is continuous, we fix an open set $U\sbs\pa_B(G,\ca P)$ and show $c^{-1}(U)$ is open. By definition of the topology, we can write $U$ as $U=\cup_{x\in U}M(x,A_x)$. Since each $M(x,A_x)$ is an open set containing $x$, the preimage $c^{-1}\big(M(x,A_x)\big)$ is of the form (\ref{eqn:open1}) and hence is open. Thus $c^{-1}(U)=\cup_{x\in U} \>c^{-1}\big(M(x,A_x)\big)$ is open, which implies that $c$ is continuous. To see that $c\rest{}{\pa\hat\Ga }$ is an embedding, it suffices to show that $c\rest{}{\pa\hat\Ga}$ is a closed map. This follows from the fact that $c$ is a closed map, which is true for any continuous map between compact metric spaces.
\end{proof}

\section{$\pa_D(G,\ca P)$ when $\pa_B(G,\ca P)=S^2$ (Proof of Theorem \ref{thm:sierpinski})} \label{S:proofof2}

Throughout this section we assume $\pa_B(G,\ca P)\simeq S^2$. Our goal is to show that this implies that $\pa_D(G,\ca P)$ is a Sierpinski carpet. Recall the outline of the proof of Theorem \ref{thm:sierpinski} given in the introduction. In the previous section we completed Step 1; in \S\ref{sec:identify} and \S\ref{sec:finish} we complete Steps 2 and 3, respectively. Before these steps, we explain why the Dahmani boundary is always defined when $\pa_B(G,\ca P)\simeq S^2$, i.e.\ why the peripheral subgroups admit boundaries. 

\subsection{Boundaries for peripheral subgroups} \label{sec:peripheral-boundary}

Fix $P\in\ca P$. To define a boundary $\pa P$ on $P$, consider the action of $P$ on 
\[\Om:=\pa_B(G,\ca P)\sm\{*_{P}\},\]
which is cocompact and properly discontinuous. Bowditch \cite[Section 2]{bowditchboundary} defines a metric $d_{\Om}$ on $\Om$ that makes the action of $P$ on $\Om$ geometric. Then $K=\ker\big[P\rightarrow\Isom(\Om)\big]$ is finite, and $P/K$ contains a finite-index subgroup $P'$ that is a closed surface group (in particular $P'$ is torsion free); this was observed in \cite[Theorem 0.3]{dahmani-parabolic}\footnote{According to the MathSciNet review of \cite{dahmani-parabolic}, the print version of this paper has an error. This has been fixed in an updated version (arxiv:0401059). The fact we are using is independent of this issue.}. It follows that $P$ acts geometrically on a model space $X$, either $\bb E^2$ or $\bb H^2$. Define $\pa P:=\pa X$ as the CAT(0) boundary.

Next we topologize $\ov\Om:=\Omega\cup\pa P$. By the classification of surfaces, there is a $P'$-equivariant homeomorphism $\Omega\ra X$. This extends to a map $\ov\Om\ra\ov X$, and we topologize $\ov\Om$ so that this map is a homeomorphism. 

The pair $(\ov X,\pa X)$ is the standard $\ca Z$-structure on $P$. It turns out that $(\ov\Om,\pa P)$ is an alternate description of this $\ca Z$-structure. Axioms 1--3 of a $\ca Z$-structure are immediate. Axiom 4 follows from Proposition \ref{prop:shadow} and Observation \ref{obs:covers} that the shawdows cover $\Om$. See also the proof of Theorem \ref{th:homeo}. Alternatively, one can use a very general ``boundary swapping" argument \cite[Theorem 1.3]{guilbault-moran} to conclude that $\ov\Om$ can be topologized so that $(\ov\Om,\pa P)$ is a $\ca Z$-structure on $P$. We will use our concrete description of the topology on $\ov\Om$ in what follows.

For later use, we choose a quasi-isometry $P\ra\Om$ by taking the orbit of a point. Specifically, choose a geodesic ray $\gamma_0$ in $\hat\Ga$ that starts at $*_{P}$, goes through the identity vertex $e\in \Ga(P)$, and ends at some point $0\in\pa\hat\Ga\sbs\Om$. (Recall that the boundary of a hyperbolic space consists of equivalence classes of geodesic rays, so here $\ga_0$ is a representative for $0\in\pa\hat\Ga$.) Then we identify $P$ with the orbit $P.0$. For $g \in P$, $g.0$ is the endpoint of the geodesic $g\ga_0$ in $\hat\Ga$ starting at $*_{P}$ and going through the vertex of $g$ in $\Ga(P)$.

\subsection{Identifying a Sierpinski carpet} \label{sec:identify}

The following lemma gives a criterion that will allow us to identify $\pa_D(G,\ca P)$ as a Sierpinski carpet. 

\begin{lemma}\label{lem:sierpinski}
Let $X$ be a compact metric space. Assume that there exists a continuous surjection $\pi:X\ra S^2$ such that
\begin{enumerate} 
\item[(i)] there exists a countable dense subset $Z=\{z_1,z_2,\ldots\}\sbs S^2$ so that the restriction of $\pi$ to $\pi^{-1}\big(S^2\sm Z\big)$ is injective, and
\item[(ii)] for each $k$, the space $X_k$ obtained from $X$ by collapsing each $C_i$ to a point for $i\neq k$ is homeomorphic to a closed disk $\mathbb D^2$. 
\end{enumerate}
Then $X$ is homeomorphic to a Sierpinski carpet. 
\end{lemma}

We remark that assumption (i) implies that $\pi\rest{}{}:\pi^{-1}(S^2\setminus Z)\ra S^2\setminus Z$ is a homeomorphism. Indeed $\pi\rest{}{}$ is a continuous bijection, and since $\pi$ is a continuous map between compact metric spaces, $\pi$ (and hence $\pi\rest{}{}$) is a closed map. Note also that (ii) implies that 

\begin{enumerate} 
\item[(iii)] for each $k$, the preimage $C_k:=\pi^{-1}(z_k)$ is an embedded circle. 
\end{enumerate}

\begin{example}\label{rmk:example} We illustrate the theorem with a non-example. Consider $X=[0,1]^2\sm \bigcup_{i=1}^\infty D_i$, where $D_i$ is a dense countable collection of open disks, with pairwise disjoint closures, that includes the collection of disks pictured in Figure \ref{fig:converging-disks}. The space $X$ is not homeomorphic to the Sierpinski carpet because the disks pictured in Figure \ref{fig:converging-disks} have diameter bounded from below, so $\{D_i\}$ is not a null family. Nevertheless, $X$ satisfies conditions (i) and (iii) above: the set $X\setminus\bigcup_{i=1}^\infty \ov{D_i}$ is homeomorphic to $S^2\setminus Z$, where $Z\subset S^2$ is countable and dense, and by collapsing each $\pa D_i$ to a point we obtain a continuous surjection $X\ra S^2$ satisfying (i) and (iii). Note however that condition (ii) from Lemma \ref{lem:sierpinski} is not satisfied. Indeed the space $X_1$ obtained from collapsing all the $D_i$ except the outer disk is not Hausdorff, and hence not homeomorphic to the closed disk. \end{example}

\begin{figure}[h!]
\labellist 
\pinlabel $x$ at 190 110
\endlabellist 
  \centering
\includegraphics[scale=.6]{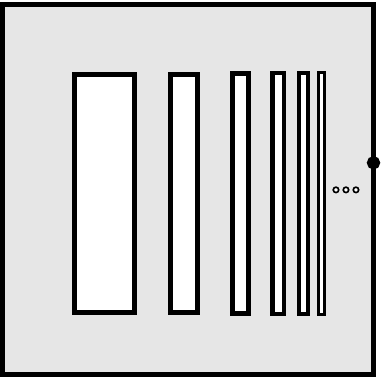}
      \caption{A collection of disjoint disks with diameter bounded from below. }
      \label{fig:converging-disks}
\end{figure}

\begin{proof}[Proof of Lemma \ref{lem:sierpinski}]
First observe that the Sierpinski carpet $X=\scr S$ satisfies the assumptions (i)--(iii) with $\pi:\scr S\ra S^2$ the map the collapses each peripheral circle to a point. Condition (ii) follows from Moore's theorem about upper semicontinuous decompositions of the plane \cite[pg 3]{daverman}. 

To prove the lemma, it suffices to show that any two compact metric spaces $X,X'$ with surjections to $S^2$ that satisfy (i)--(iii) are homeomorphic. For $k\ge0$, let $X(k)$ be the space obtained by collapsing each circle $C_i$ to a point for $i>k$ (i.e.\ we collapse all but the first $k$ circles). There are maps $p_k:X(k)\ra X(k-1)$, and $X=\lim X(k)$ is the inverse limit. Similarly, we express $X'=\lim X'(k)$. To show $X$ is homeomorphic to $X'$, we'll show that the inverse systems $\{X(k),p_k\}$ and $\{X'(k),p_k'\}$ are isomorphic. 

First we describe the topology of $X(k)$. By (ii), each $X_{k}$ is homeomorphic to $\mathbb D^2$, or equivalently $S^2\sm D$, where $D$ is an open disk. From this, it's not hard to see that $X(k)\simeq S^2\sm\bigcup_1^kD_i$, where $D_i$ are open embedded disks with disjoint closures. For example, in the case $k=2$, consider the following diagrams.
\[\begin{xy}
(0,0)*+{X(2)}="A";
(-10,-10)*+{X_{1}}="B";
(10,-10)*+{X_{2}}="C";
(0,-20)*+{X(0)}="D";
{\ar"A";"B"}?*!/_3mm/{};
{\ar "A";"C"}?*!/_3mm/{};
{\ar "B";"D"}?*!/^3mm/{};
{\ar "C";"D"}?*!/_3mm/{};
(50,0)*+{X(2)}="E";
(40,-10)*+{\bb D^2}="F";
(60,-10)*+{\bb D^2}="G";
(50,-20)*+{S^2}="H";
(25,-10)*+{\simeq}="Z";
{\ar"E";"F"}?*!/_3mm/{};
{\ar "E";"G"}?*!/_3mm/{};
{\ar "F";"H"}?*!/^3mm/{};
{\ar "G";"H"}?*!/_3mm/{};
\end{xy}
\]
Assumption (i) implies that $X(2)\ra X(0)$ is a homeomorphism away from $C_1\cup C_2$, and so $X(2)\sm(C_1\cup C_2)$ is homeomorphic to an open annulus $S^1\times(0,1)$. Furthermore, by assumption (ii), $X(2)\ra X_{i}$ is a homeomorphism in a neighborhood of $C_i$, so it follows that $X(2)$ is homeomorphic to an annulus. 

Note also that the restriction $p_{k+1}\rest{}{}:X(k+1)\setminus C_{k+1}\ra X(k)\setminus\{z_{k+1}\}$ is a homeomorphism. This follows from the definitions and assumption (i). 

We construct compatible homeomorphisms $\phi_k:X(k)\ra X'(k)$ inductively. For $k=0$, let $Z=\{z_i\}$ and $Z'=\{z_i'\}\sbs S^2$ be the countable dense subsets associated to $X,X'$. By \cite[Thm 3]{bennett} there exists a homeomorphism $\phi_0:X(0)\simeq S^2\ra S^2\simeq X'(0)$ so that $\phi_0(Z)=Z'$. Without loss of generality, we assume that $\phi_0(z_i)=z_i'$ for each $i$. For the induction step, suppose we're given a homeomorphism $\phi_{k}:X(k)\ra X'(k)$ and a commutative diagram 
\[\begin{xy}
(0,0)*+{X(k)}="A";
(20,0)*+{X'(k)}="B";
(0,-15)*+{X(0)}="C";
(20,-15)*+{X'(0)}="D";
{\ar"A";"B"}?*!/_3mm/{\phi_{k}};
{\ar "A";"C"}?*!/^3mm/{};
{\ar "B";"D"}?*!/_3mm/{};
{\ar "C";"D"}?*!/_3mm/{\phi_0};
\end{xy}\]
By the choice of $\phi_0$, it follows that $\phi_{k}(z_{k+1})=z_{k+1}'$. Then $\phi_k$ restricts to a homeomorphism $\phi_k\rest{}{}:X(k+1)\setminus C_{k+1}\ra X'(k+1)\setminus C_{k+1}'$. Since $X(k)$ is compact, $\phi_k$ is uniformly continuous, so $\phi_k\rest{}{}$ extends uniquely to a homeomorphism 
\[\phi_{k+1}:X(k+1)\ra X'(k+1)\] such that $\phi_{k+1}\circ p_{k+1}=p_{k+1}'\circ \phi_{k+1}$. This shows that the inverse systems $\{X(k),p_k\}$ and $\{X'(k),p_k'\}$ are isomorphic, so then the inverse limits $X,X'$ are homeomorphic. 
\end{proof}

\subsection{Collapsing the Dahmani boundary to a disk}\label{sec:finish}

In this section we show that $\pa_D(G,\ca P)$ and the collapse map $c:\pa_D(G,\ca P)\ra\pa_B(G,\ca P)\simeq S^2$ satisfy the assumptions of Lemma \ref{lem:sierpinski}, which allows us to conclude that $\pa_D(G,\ca P)$ is a Sierpinski carpet. The main result is as follows. 

\begin{theorem}  \label{th:homeo} Let $(G,\ca P)$ be relatively hyperbolic with $\pa_B(G,\ca P) \simeq S^2$. Fix $P\in\ca P$, and let $\pa_D(G,\ca P)_P$ be the quotient of $\pa_D(G,\ca P)$ obtained by collapsing $\pa Q$ to a point for each $Q\in\ca P\sm\{P\}$. Then $\pa_D(G,\ca P)_P$ is $P$-equivariantly homeomorphic to the disk $\overline{\Omega}$ (c.f.\ \S\ref{sec:peripheral-boundary}).
\end{theorem} 

{\it Remark.} An analogous theorem to Theorem \ref{th:homeo} holds more generally for relatively hyperbolic groups with $\pa_B(G,\ca P)\simeq S^n$ whose peripheral subgroups have $\ca Z$-boundaries (so $\Omega$ has a natural $\ca Z$-set compactification) with a similar proof.

The proof of Theorem \ref{th:homeo} will rely on the Proposition \ref{prop:shadow} below, which is a general fact about the shadow of points in the Bowditch boundary. The proof of Proposition \ref{prop:shadow} is technical, so we postpone it to the end of the section. 

\begin{proposition}\label{prop:shadow}
Let $(G, \mathcal{P})$ be a relatively hyperbolic group such that each $P\in\ca P$ admits a boundary $\pa P$. For each $P\in\ca P$, and $v\in\Ga(P)\sbs\hat\Ga$, the shadow $\Sh_B(v,P)\sbs\Om$ is bounded in the Bowditch metric on $\Om$. 
\end{proposition}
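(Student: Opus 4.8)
The plan is to show that $\Sh_B(v,P)$ is bounded in the Bowditch metric $d_\Om$ on $\Om = \pa_B(G,\ca P)\sm\{*_P\}$ by exploiting that the $P$-action on $\Om$ is geometric, so that $d_\Om$ is quasi-isometric to the word metric on $P$ via the orbit map $g\mapsto g.0$ described in \S\ref{sec:peripheral-boundary}. Recall $\Sh_B(v,P)$ consists of all boundary points and vertices reachable from $v$ by a geodesic ray (or arc) that immediately leaves $\Ga(P)$ and never passes through $*_P$. The key observation is that such a ray, read in $\hat\Ga$, is a geodesic that does not penetrate $P$; so by bounded coset penetration its ``footprint'' near $\Ga(P)$ must be controlled. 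More precisely, I would fix the base ray $\ga_0$ from $*_P$ through $e\in\Ga(P)$ to $0\in\pa\hat\Ga$, and compare it to a ray realizing a point $w\in\Sh_B(v,P)$. The geodesic from $v$ to $w$ together with a short detour shows that $w$ lies within a bounded Bowditch distance of the point $v.0$ (the orbit point corresponding to the vertex $v$), and $v$ itself lies at word-distance at most, say, $1$ or $2$ from $*_P$ in $\hat\Ga$ — but that is not the right statement, since $v$ can be far from $e$ in $\Ga(P)$. The correct claim is rather that $\Sh_B(v,P)$ is contained in a ball of radius $R$ (independent of $v$ — or at least depending in a controlled way on nothing, by the $P$-homogeneity) around the orbit point $v.0\in\Om$.

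The cleanest route is to reduce to the case $v=e$ by $P$-equivariance. Since $P$ acts on $\Ga(P)$ transitively on vertices (up to the finitely many orbits, all within bounded distance), and since the collection of shadows $\{\Sh_B(v,P)\}_{v\in\Ga(P)}$ is permuted by $P$ with $\diam_{d_\Om}$ preserved (because $P$ acts by isometries of $d_\Om$), it suffices to bound $\diam_{d_\Om}\Sh_B(e,P)$. Now a point $w\in\Sh_B(e,P)$ is the endpoint of a geodesic ray $\ga$ from $e$ that immediately leaves $\Ga(P)$ and avoids $*_P$. I would argue that $\ga$ can be extended backwards by the segment of $\ga_0$ from $*_P$ to $e$ to produce a $(\la,0)$ quasi-geodesic without backtracking from $*_P$ to $w$; comparing it with an actual geodesic from $*_P$ to $w$ (which may or may not penetrate $P$) via bounded coset penetration (i) and (ii), I get that the entering/exiting data in $\Ga(P)$ is uniformly bounded — in particular the exiting vertex of any geodesic $*_P\to w$ lies within a bounded $\Ga(P)$-distance of $e$. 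Translating $\Ga(P)$-distances to $d_\Om$-distances via the quasi-isometry $P\hra\Om$, this says exactly that all of $\Sh_B(e,P)$ lies within a bounded $d_\Om$-ball of the orbit point $e.0 = 0$, hence $\diam_{d_\Om}\Sh_B(e,P)<\infty$.

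The main obstacle I anticipate is making the bounded-coset-penetration argument rigorous when $w$ is a boundary point rather than a vertex, and when $w\in\Sh_B(v,P)$ is itself another cone point $*_{P'}$ (these lie in $\pa_B(G,\ca P)$). For boundary points one should work with quasi-geodesic rays and take limits, or invoke the $M_{(\la,c)}(v,A)$-description of the topology (cited from Bowditch \S8 and \cite[\S3]{tran}) to stay in the realm of finite arcs; for cone points one uses that $\Sh_B$ is defined as the complement of $*_P$ in $\cup_v \Sh(v,P)$, so every such $*_{P'}$ is joined to some $v$ by a geodesic avoiding $*_P$, and the same detour-and-compare argument applies. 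A secondary technical point is the passage between the $\hat\Ga$-geometry near $\Ga(P)$ and the intrinsic word metric on $P$ (note the remark in the paper that all vertices of $P$ are at distance $1$ in $\hat\Ga$, so one must use the un-coned $\Ga(P)$-metric, which is quasi-isometric to $d_\Om$ by Bowditch \cite[\S2]{bowditchboundary}); I would state this comparison carefully as a lemma before the main estimate. Once these points are handled, boundedness of $\Sh_B(v,P)$ in $d_\Om$ follows, uniformly over $v$, which is more than enough for the stated proposition.
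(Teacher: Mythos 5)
Your reduction to $v=e$ by $P$-equivariance is fine, and your use of bounded coset penetration is a legitimate route to one of the two ingredients the paper needs: prepending the edge $[*_P,v]$ to a geodesic realizing $w\in\Sh_B(v,P)$ and comparing with an arbitrary geodesic from $*_P$ to $w$ does show that every such geodesic exits $\Ga(P)$ within a uniformly bounded $\Ga(P)$-distance of $v$. (The paper reaches the same intermediate conclusion differently, by transferring $\ga_n$ to quasi-geodesics in a proper space $X$ on which $G$ acts geometrically finitely, showing there are only finitely many choices for the second horoball, and using that any two geodesics between a fixed pair of horoballs fellow-travel; this is its Step 1.) The technical caveats you flag --- limits for ideal endpoints, cone points in the shadow --- are real but manageable.

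The genuine gap is the final sentence of your second paragraph: ``translating $\Ga(P)$-distances to $d_\Om$-distances via the quasi-isometry $P\hra\Om$'' does not say what you need it to say. That quasi-isometry is the orbit map $g\mapsto g.0$; it controls $d_\Om$-distances \emph{between orbit points} in terms of the word metric on $P$. It gives no a priori relation between $d_\Om(0,w)$ for an arbitrary $w\in\Om$ and the position in $\Ga(P)$ of the exit vertex of a geodesic from $*_P$ to $w$. The implication you are implicitly invoking --- ``exit vertices at bounded $\Ga(P)$-distance $\Rightarrow$ bounded $d_\Om$-distance'' (equivalently its contrapositive) --- is the actual analytic content of the proposition, and it requires the finer structure of Bowditch's metric $d_\Om$, not just the fact that $P$ acts geometrically. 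This is exactly what the paper's Steps 2 and 3 supply: assuming $d_\Om(0,\xi_n)\to\infty$, cocompactness produces translates $p_n\to\infty$ in $\Ga(P)$ with $p_n(\xi_n)$ bounded; one then builds $(1,c)$-quasi-geodesics from $*_P$ to the center $z_{k,\ell}$ of the ideal triangle on $(*_P,p_k(\xi_k),p_\ell(\xi_\ell))$ and plays the Morse lemma (their exit points on the horosphere $S(P)$ are within $R'$ in the ambient metric) against the exponential distortion $\si(x,y)\le K\om^{\rho(x,y)}+C$ of the horosphere (which forces those exit points to be further than $R'$ apart once $p_k,p_\ell$ are far apart in $\Ga(P)$). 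Without an argument of this kind --- or an explicit citation to a statement in \cite{bowditchboundary} identifying $d_\Om$-distance with exit-point distance on $S(P)$ --- your proof does not close.
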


We note that since $P$ acts isometrically on $\Om$ and $\Sh_B(g\cdot v,P)=g\cdot\Sh_B(v,P)$, in fact $\Sh_B(v,P)$ is bounded uniformly for $v\in\Ga(P)$. 

\begin{proof}[Proof of Theorem \ref{th:homeo}]
There is a homeomorphism 
\[H:\pa_D(G,\ca P)_P\sm(\pa P)\ra\pa_B(G,\ca P)\sm\{*_P\},\]
since, by definition, the domain and codomain are equal as sets, and the identity map is a homeomorphism by Proposition \ref{prop:quotient}. Set $\Om:=\pa_B(G,\ca P)\sm\{*_{P}\}$ and $\ov{\Om}:=\Om\cup\pa P\simeq\bb D^2$ as in \S\ref{sec:peripheral-boundary}. Then $H$ extends (via the identity map $\pa P\ra \pa P$) to a bijection $\ov H:\pa_D(G,\ca P)_P\ra \ov{\Om}$, which is  equivariant. To prove the theorem, we need only show that $\ov H$ is a homeomorphism. 

Since $\pa_D(G,\ca P)_P$ is compact and $\ov\Om$ is Hausdorff, it suffices to show that $H$ is continuous; furthermore, since $H$ is a homeomorphism, we only need to show continuity of $\ov H$ at each $\xi\in\pa P$. Fixing $\xi\in\pa P$, it suffices to show that for every neighborhood $U$ of $\xi$ in $\ov\Om$, there exists a neighborhood $W$ of $\xi\in\pa_D(G,\ca P)$ so that $\ov H(W)\sbs U$. 

Since $Sh_B(v,P)\sbs\Om$ is bounded for each $v\in\Ga(P)$ (by Proposition \ref{prop:shadow}), there is a neighborhood $\xi\in V \subset P\cup\pa P$ such that if $v \in V$, then $\Sh_B(v,P)\sbs U$. Indeed let $\hat V$ consist of the vertices $v_g$ in $P$ such that the endpoint of $g.\gamma_0$ is in $U$, and far enough from the frontier of $U$ such that the shadow $\Sh_B(v_g,P)$ fits in $U$. Then $V$ is the interior of the closure of $\hat V$ in $P \cup \pa P$. Now the set $W_0=(V\cap \pa P)\cup \big(\cup_{v\in V}\Sh_D(v,P)\big)$ is open in $\pa_D(G,\ca P)$ (c.f.\ (\ref{eqn:open2})), and it is saturated with respect to $f_P$, so $W:=f_P(W_0)$ is open in $\pa_D(G,\ca P)_P$ and $\ov H(W)\sbs U$. This completes the proof. 
\end{proof} 

In summary, we have proved Theorem \ref{thm:sierpinski}, modulo a proof of Proposition \ref{prop:shadow}. To see this, suppose that $\partial_B(G,\ca P)\simeq S^2$. Then there is a surjection $\partial_D(G,\ca P)\ra S^2$ which satisfies the assumptions of Lemma \ref{lem:sierpinski} by Proposition \ref{prop:quotient} and Theorem \ref{th:homeo}. Then by Lemma \ref{lem:sierpinski}, $\partial_D(G,\ca P)$ is homeomorphic to the Sierpinski carpet $\scr S$. 

We remark that our understanding of the shadows allows us to prove that the complement of a point in the Bowdtich boundary admits an equivariant $Z$-set compactification (when the peripheral groups admit an equivariant $Z$-set compactification.)

\subsection{Shadows in the Bowditch boundary (Proof of Proposition \ref{prop:shadow}) } Before we begin the proof we need some additional notions and notations from \cite{bowditch}. 
When $(G,\ca P)$ is relatively hyperbolic group pair, there exists a proper hyperbolic metric space $X$ on which $G$ acts geometrically finitely. There are many models for such a space $X$, e.g.\ \cite[\S3]{bowditch} or \cite{groves-manning}, and the existence of such an $X$ is one definition of a relatively hyperbolic group pair. The main fact we will need is that the nerve of a system of horoballs in $X$ is quasi-isometric to $\hat\Ga$.

 From $X$ one can obtain a fine hyperbolic graph $K=K(X)$ by considering the nerve of an appropriate collection of horoballs $\{H(P)\}_{P\in\ca P}$ in $X$ \cite[\S7]{bowditch}. The graph $K$ has vertex set $V(K)\simeq \ca P$.
\begin{lemma}
The graph $K$ is quasi-isometric to $\hat\Ga$.
\end{lemma}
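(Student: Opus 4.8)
The plan is to establish the quasi-isometry $K \simeq \hat\Ga$ by showing that both graphs encode the same combinatorial data: their vertices of infinite valence correspond bijectively to $\ca P$, and the ``coarse geometry between peripheral subgroups'' is the same in both. Concretely, I would first recall from \cite[\S7]{bowditch} that $K$ is the nerve of the collection of horoballs $\{H(P)\}_{P\in\ca P}$ in the proper hyperbolic space $X$ on which $G$ acts geometrically finitely, so $V(K) \simeq \ca P$ and two vertices $P, Q$ are joined by an edge in $K$ precisely when the horoballs $H(P)$ and $H(Q)$ come within some fixed bounded distance of each other (or overlap, after the standard adjustment shrinking them to be disjoint). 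On the other side, in $\hat\Ga$ the cone points $*_P$ for $P \in \ca P$ are exactly the vertices of infinite valence, every vertex of $\Ga(P)$ is at distance $1/2$ from $*_P$, and any two cone points $*_P, *_Q$ are within distance $2$ in $\hat\Ga$ iff $\Ga(P)$ and $\Ga(Q)$ share a vertex or are joined by a single edge of $S$.

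The main step is to construct the quasi-isometry explicitly. I would define $\phi : K \to \hat\Ga$ on vertices by sending the vertex of $K$ labelled $P$ to the cone point $*_P \in \hat\Ga$, and check this is a quasi-isometric embedding with quasi-dense image. Quasi-density is immediate: every vertex $g \in \Ga(G) \subset \hat\Ga$ lies in some $\Ga(P)$ (each element of $G$ is in the coset defining some conjugate peripheral, or at worst within distance $1$ of one — one uses here that $G$ acts cocompactly on $X$ minus the horoballs, equivalently that $\Ga(G)$ minus the peripheral subgraphs is ``bounded'' in $\hat\Ga$), hence within distance $1$ of a cone point. For the distance comparison, the key point is the standard dictionary between $\hat\Ga$-distance and the horoball model: $d_{\hat\Ga}(*_P, *_Q)$ is coarsely the number of distinct horoballs a geodesic in $X$ from $H(P)$ to $H(Q)$ must pass through, plus a bounded error, and this is exactly (coarsely) $d_K(P,Q)$ since $K$ is the nerve recording adjacency of horoballs. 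This uses that geodesics in $X$ penetrate each horoball in a uniformly bounded way (a consequence of $\de$-hyperbolicity together with the bounded coset penetration / geometric finiteness), so that a path in $X$ realizing the distance gives a path in $K$ of comparable length, and conversely an edge-path in $K$ of length $n$ gives a path in $X$ crossing $n+1$ horoballs which projects to a path of length $\lesssim n$ between cone points in $\hat\Ga$.

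I would then cite or reprove the comparison between $X$ (away from horoballs) and $\Ga(G)$: since $G$ acts geometrically finitely on $X$, the complement of an equivariant system of open horoballs is quasi-isometric to $\Ga(G)$, and collapsing each horoball $H(P)$ to a point (which is what forming the nerve $K$ does, up to quasi-isometry) is the metric analogue of adding the cone point $*_P$ in $\hat\Ga$. Making this precise — that ``$X$ with horoballs collapsed'' is quasi-isometric to both $K$ and $\hat\Ga$ — is really the content of the lemma, and it is essentially \cite[\S7]{bowditch} combined with Dahmani's observation \cite{Dahmanithesis} that $\hat\Ga$ is the fine hyperbolic graph associated to this data; I would assemble it from those references rather than redo it from scratch.

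The main obstacle I anticipate is bookkeeping the uniform constants: one must check that the ``bounded coset penetration'' of geodesics into horoballs in $X$, the bounded number of cycles through each edge of $\hat\Ga$ (fineness), and the nerve construction all interact with uniform, $G$-independent constants, so that the resulting map is a genuine $(\la, c)$ quasi-isometry and not merely a coarse equivalence with unbounded error along a sequence of peripherals. Concretely, the delicate point is ruling out that two horoballs $H(P), H(Q)$ with $*_P, *_Q$ close in $\hat\Ga$ could nonetheless be ``far'' in the nerve because the chain of intermediate horoballs is long; this is controlled by hyperbolicity of $X$ (a geodesic between two nearby-in-$\hat\Ga$ peripherals cannot wander through many horoballs), but the estimate needs to be stated carefully. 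Once the uniform constants are pinned down, the quasi-isometry follows formally.
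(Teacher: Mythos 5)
Your route is sound in outline but genuinely different from the paper's. You work through the geometry of $X$: identifying $d_{\hat\Ga}(*_P,*_Q)$ coarsely with the number of horoballs crossed by a geodesic in $X$ from $H(P)$ to $H(Q)$, which is in turn the nerve distance in $K$. That dictionary is true, but it is essentially the content of the equivalence of Farb's and Bowditch's definitions of relative hyperbolicity, and the uniform-constant bookkeeping you flag at the end (bounded horoball penetration, ruling out long chains of intermediate horoballs between cone points that are close in $\hat\Ga$) is exactly where all the work lives. The paper sidesteps this entirely: it introduces an intermediate graph $\Lam$ with vertex set $\{*_P : P\in\ca P\}$ and an edge whenever two cone points are joined in $\hat\Ga$ by an arc of length at most $2$ through $\Ga(G)$; the elementary inequality $d_\Lam(*_{P_1},*_{P_2})\le d_{\hat\Ga}(*_{P_1},*_{P_2})\le 2\,d_\Lam(*_{P_1},*_{P_2})$, together with the fact that every vertex of $\hat\Ga$ is within $1/2$ of a cone point, gives $\hat\Ga\sim\Lam$ immediately, and then $\Lam\sim K$ follows from a purely combinatorial principle (Bowditch, Lem.\ 4.2): any two connected graphs with vertex set $\ca P$ carrying cocompact $G$-actions are quasi-isometric. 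Your approach buys an explicit description of how the quasi-isometry interacts with geodesics and horoballs (information the paper does end up needing later, in the proof of Proposition \ref{prop:shadow}, but establishes there separately); the paper's approach buys brevity and avoids any uniform-penetration estimates. One small correction to your argument: quasi-density of the cone points needs no appeal to cocompactness of the $G$-action on $X$ minus the horoballs --- every vertex $g$ of $\Ga(G)$ lies in the coset $gP_i$, hence is a vertex of $\Ga(gP_ig^{-1})$ and sits at distance exactly $1/2$ from the cone point $*_{gP_ig^{-1}}$ by construction of $\hat\Ga$.
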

\begin{proof}
First we claim that $\hat\Ga$ is quasi-isometric to the graph $\Lam$ that has vertex set $\{*_P:P\in \ca P\}$ and an edge between $*_P$ and $*_{P'}$ if there exists an arc (i.e.\ a path with distinct vertices) between them in $\hat\Ga$ of length at most 2 such that the intermediate vertices are in $\Ga(G)\sbs\hat\Ga$. The definition of $\Lam$ is a special instance of the ``$K(A,n)$" construction in \cite[\S2]{bowditch}. To define a quasi-isometry $\Lambda\ra\hat\Ga$, note that both $\Lam$ and $\hat \Ga$ are quasi-isometric to the subset $\{*_P: P\in\ca P\}$ in each with the associated metrics, since every vertex of $\hat \Ga$ is within distance $1/2$ of some $*_P$. Then by composing, there is a map 
\begin{equation}\label{eqn:quasi-isom}\phi:\Lam\ra\hat\Ga\end{equation} that is the identity on $\{*_P: P\in\ca P\}$.  This is a quasi-isometry because 
$d_{\Lam}(*_{P_1},*_{P_2})\le d_{\hat\Ga}(*_{P_1},*_{P_2})\le 2 d_\Lam(*_{P_1},*_{P_2})$.  Notice that for any edge in $\hat \Ga$, it either meets an element of $V_\infty$, goes between two vertices at distance 1/2 from the same element of $V_\infty$, or goes between two vertices which are at distance $1/2$ from two different elements of $V_\infty$.

For any $X$ on which $(G, \mathcal{P})$ acts geometrically finitely, $\Lam$ and $K=K(X)$ are quasi-isometric because both are connected graphs with vertex set $\ca P$ and with a cocompact $G$ action; c.f.\ \cite[Lem.\ 4.2]{bowditch}. \end{proof}

It will be useful to choose a quasi-isometry 
\begin{equation}\label{eqn:qi}\pi: \hat\Ga\ra K.\end{equation} For this, it suffices to choose a coarse inverse $\psi:\hat\Ga\ra\Lam$ to the map $\phi$ in (\ref{eqn:quasi-isom}) (then we can compose with any quasi-isometry $\Lam\ra K$ that is the identity on vertices). To define $\psi$, we choose for each $v\in\Ga(G)$ an element $P_v\in\ca P$ so that $v$ is adjacent to $*_{P_v}$. If we fix $P\in\ca P$, then we can define $P_v$ as the unique subgroup (with $*_{P_v}$ adjacent to $v$) that's conjugate to $P$. Then $\psi$ is equivariant.

There is a homeomorphism  $\pa_B(G,\ca P)\ra \pa X$ \cite[\S 9]{bowditch}. Furthermore, if we label the parabolic fixed points $\Pi$ in $\pa X$ by the peripheral group $P \in \ca P $ which fixes it, then the homeomorphism from $\pa_B(G,\ca P) =\pa\hat\Ga\cup V_\infty(\hat \Ga)$ to $\partial X$ is the identity on $V_\infty(\hat\Ga)$.  Since the fixed points of the conjugates of any peripheral subgroup are dense in $\partial X$, it follows that once we fix the image of some  $*_P$ (that is, label one of the peripheral fixed points of $\partial X$) there is exactly one equivariant homeomorphism between $\pa_B(G,\ca P)$ and $\partial X$.  This allows us to canonically identify $\Omega = \partial X \setminus \{*_P\}$ with $\pa_B(G,\ca P) \setminus \{*_P\}$. 

Bowditch \cite[Section 2]{bowditchboundary} puts a metric $d_\Om$ on $\Omega$ that makes the $P$ action geometric. If two points $x,y \in \Omega$ are close in this metric, the center $z\in X$ of the ideal triangle in $X$ with vertices $x,y$ and $ *_P$ is ``close to" $\Omega$, which means that there is a horofunction $h:X\ra\R$ about $*_P$ with $h(z)\ll0$.

\begin{proof}[Proof of Proposition \ref{prop:shadow}] 
Recall from \S\ref{sec:peripheral-boundary} that we've chosen $P\hra\Om$ as the $P$-orbit of the endpoint $0\in\Om$ of a given geodesic ray $\ga_0$. We take the space $X$ with horoballs/horospheres $H(P), S(P)$, and the fine hyperbolic graph $K=K(X)$ as discussed in the preceding paragraphs. 

{\bf Step 1} (From geodesics in $\hat\Ga$ to geodesics in $X$). Suppose, for a contradiction, that the shadow of $e\in\Ga(P)$ is unbounded. Then there exist geodesics $\ga_n$ in $\hat\Ga$ from $*_P$ through $e\in\Ga(P)$ with endpoints $\xi_n\in\Om\sbs\pa_B(G,\ca P)$ such that $d_\Om(0,\xi_n)\ra\infty$. 

The image $\pi(\ga_n)$ under the quasi-isometry $\pi:\hat\Ga\ra K$ in (\ref{eqn:qi}) is a quasi-geodesic. Each $\pi(\ga_n)$ can be described as a sequence of horoballs $H(P_{n,1}), H(P_{n,2}),\ld$ in $X$, where $P_{n,1}=P$ and adjacent horoballs in this sequence are distinct.

{\it Claim.} After passing to a subsequence we can assume $H(P_{n,2})=H(P_2)$ is constant. 

{\it Proof of Claim.} 
We show there are only finitely many possibilities for the first vertex of $\pi(\ga_n)$ that differs from $*_P$. Recall that $\pi$ sends a vertex $v\in\Ga(G)$ to one of the adjacent cone vertices $*_{P_v}\in\hat \Gamma$ (such $P_v$ is conjugate to $P$), and is the identity on the cone vertices. Enumerate the vertices along the path $\ga_n$ as $(v_{n,1},v_{n,2},\ld)$. By assumption $v_{n,1}=*_P$ and $v_{n,2}=e$. By definition $\pi(v_{n,1})=\pi(v_{n,2})=*_P$. 

The first vertex of $\pi(\ga_n)$ that differs from $*_P$ will be $\pi(v_{n,3})$. There are two possibilities: either (a) $v_{n,3}$ is a cone point $*_{P_2}$ or (b) $v_{n,3}$ is a vertex of the Cayley graph $\Ga(G)$. In case (a), $\pi(v_{n,3})=*_{P_2}$, and since $*_{P_2}$ is adjacent to $e$, there are finitely many such choices.  In case (b), $\pi(v_{n,3})$ is the cone point adjacent to $v_{n,3}$ whose stabilizer is conjugate to $P$. Since $v_{n,3}$ is adjacent to $e$ and there are finitely many such vertices in $\Ga(G)$ (as $G$ is finitely generated), this shows there are finitely many possibilities for $\pi(v_{n,3})$. \qed

From $\pi(\ga_n)$, we can construct a quasi-geodesic in $X$ as follows. Let $H(P_{n,1}), H(P_{n,2}),\ld$ be the sequence of horoballs along $\pi(\ga_n)$ as defined above. For $i\ge1$, choose a geodesic arc $\al_{n,i}$ between $H(P_{n,i})$ and $H(P_{n,i+1})$ that has endpoints on the horospheres $S(P_{n,i})$ and $S(P_{n,i+1})$. Then choose a geodesic arc $\be_{n,i}$ between the endpoint of $\al_{n,i}$ and the starting point of $\al_{n,i+1}$. The concatenation $\al_{n,1}*\be_{n,1}*\al_{n,2}*\be_{n,2}*\cd$ is a quasi-geodesic with constants depending only on the quasi-geodesic constants for $\ga_n$ \cite[Lem.\ 7.3,7.6]{bowditch}. Since the $\ga_n$ have uniform constants, the quasi-geodesic $\al_{n,1}*\be_{n,1}*\cd$ is a bounded distance (with bound uniform in $n$) from a geodesic $\ga_n'$ in $X$. If $\ga_n$ represents a point $\xi_n\in\pa\hat\Ga$, then $\ga_n'$ represents the same point on $\pa X$, with respect to the natural homeomorphism $\pa_B(G,\ca P)\ra \pa X$ that takes $*_P$ to itself. 

Since the quasi-geodesics $\pi(\ga_n)$ all have the same first three vertices, there is a bounded subset of the horosphere $S(P)$ that contains $\ga_n'\cap S(P)$ for each $n$. This is because the quasi-geodesic in $X$ corresponding to $\pi(\ga_n)$, described above,  contains a geodesic segment connecting the horoballs $H(P)$ and $H(P_2)$, and any two geodesics between a pair of horoballs lie within a bounded distance from one another, c.f.\ \cite[\S9]{bowditch}.

{\bf Step 2} (Centers of ideal triangles). Let $(\xi_n)$ be the sequence of endpoints of the $\ga_n'$ in $\Omega$. Since $d_\Om(0,\xi_n)\ra\infty$ by assumption, and the $P$-action on $\Om$ is cocompact, we can chose $p_n\in P$ (with distance from $e$ in $\Ga(P)$ going to infinity) so that $d_\Om(0,p_n(\xi_n))$ is bounded. Then by passing to a subsequence we can assume that $p_n(\xi_n)$ converge in $\Om$, and in particular form a Cauchy sequence. By choosing $N$ sufficiently large, we can ensure that if $n,m>N$, then $d_\Omega\big(p_n(\xi_n),p_m(\xi_m)\big)$ is small enough to ensure that if $z_{n,m}\in X$ is the center of the ideal triangle formed by the triple $*_P$, $p_n(\xi_n)$, $p_m(\xi_m)$, then $z_{n,m}$ is disjoint from $H(P)$. 

For each $n,m>N$, we define two quasi-geodesics $\eta_{n,m}^n$ and $\eta_{n,m}^m$ between $*_P$ and $z_{n,m}$. Each is a union of two geodesic segments: for $i=n,m$, the quasi-geodesic $\eta_{n,m}^i$ follows $p_i\ga_i'$ until it nears $z_{n,m}$ and then follows a geodesic to $z_{n,m}$. Note that $\eta_{n,m}^i$ is a $(1,2c^i_{n,m})$-quasi-geodesic, where $c_{n,m}^i$ is the distance from $p_i\ga_i'$ to $z_{n,m}$. The constant $c_{n,m}^i$ is bounded in terms of the hyperbolicity constant, so the collection of quasi-geodesics $\eta_{n,m}^n$ and $\eta_{n,m}^m$ for all $n,m>N$ are all $(1,c)$-quasi-geodesics for some $c$. 

Since $z_{n,m}\notin H(P)$, the quasi-geodesics $\eta_{n,m}^i$ and $\eta_{n,m}^i$ exit $H(P)$ for $i=n,m$. Furthermore, the distance between the sets $\eta_{n,m}^n\cap S(P)$ and $\eta_{n,m}^m\cap S(P)$ is roughly comparable to the distance between $p_n$ and $p_m$ in $\Ga(P)$. This is because $\ga_n'$ and $\ga_m'$ intersect $S(P)$ in a bounded region, the intersection of $\eta_{n,m}^i$ with $S(P)$ is within the $p_i$ translate of this bounded region, and the $P$ action on the horosphere defines a quasi-isometry between the word metric on $\Ga(P)$ and the horospherical metric on $S(P)$, c.f.\ \cite[\S1]{bowditchboundary}.

{\bf Step 3} (Intrinsic and extrinsic distance in the horosphere $S(P)$). In this step we'll fix $k,\ell>N$ and consider the quasi-geodesics $\eta_{k,\ell}^k$ and $\eta_{k,\ell}^\ell$. On the one hand, $\eta_{k,\ell}^k$ and $\eta_{k,\ell}^\ell$ are a bounded distance from one another, so must exit the horoball at a bounded distance. On the other hand, the distance between $\eta_{k,\ell}^k\cap S(P)$ and $\eta_{k,\ell}^\ell\cap S(P)$ is comparable to the distance between $p_k$ and $p_\ell$ in $\Ga(P)$, which we can make as large as we want by choosing $\ell\gg k$. This tension leads to a contradiction, as we now make precise.

There is a constant $R$ so that if $\ga$ is a geodesic and $\ga'$ is a $(1,c)$-quasi-geodesic with the same endpoints, then the Hausdorff distance between $\ga,\ga'$ is less than $R$. Similarly, any two $(1,c)$-quasi-geodesics $\ga',\ga''$ with the same endpoints as $\ga$ are contained in a $2R$ neighborhood of one another. It follows that at each time $t$ the distance between $\ga'(t)$ and $\ga''(t)$ is less than $R':=4R+c$. 

According to \cite[\S1]{bowditchboundary}, the distance in $(X,\rho)$ between two points in $S(P)$ is comparable to the intrinsic metric $\si$ on $S(P)$: there are constants $K,C,\om$ so that $\si(x,y) \le K\om^{\rho(x,y)}+C$. 
Since $(S(P),\si)$ and $\Ga(P)$ are quasi-isometric, it follows that we can find $D>0$ so that if $p,q$ have distance at least $\om^D$ in $\Ga(P)$, and $x\in S(P)$, then $\rho(px,qx)> R'$. 

Choose $k>N$ and $\ell\gg k$ so that the distance between $p_k$ and $p_\ell$ in $\Ga(P)$ is greater than $\om^{D}$ (this is possible because the sequence $p_n$ is unbounded in $\Ga(P)$). Consider the $(1,c)$-quasi-geodesics $\eta_{k,\ell}^k$ and $\eta_{k,\ell}^\ell$ between $*_P$ and $z_{k,\ell}$. On the one hand, the distance between $\eta_{k,\ell}^k\cap S(P)$ and $\eta_{k,\ell}^\ell\cap S(P)$ is less than $R'$ because $\eta_{k,\ell}^k$ and $\eta_{k,\ell}^\ell$ are $(1,c)$-quasi-geodesics with the same endpoints. On the other hand, the distance between $\eta_{k,\ell}^k\cap S(P)$ and $\eta_{k,\ell}^\ell\cap S(P)$ is greater than $R'$ because $p_k,p_\ell$ have distance greater than $\om^D$ in $\Ga(P$). This contradiction implies that the shadow of a point is bounded. 
\end{proof}

\section{Corollaries to Theorem \ref{thm:sierpinski}} \label{sec:cor}

\subsection{Dahmani boundary of the double (Proof of Corollary \ref{cor:sphere})}\label{sec:double}

First we recall the definition of the double $G_\de$ of $G$ along its peripheral subgroups. We use notation similar to \cite{kk}.  

\begin{definition} Let $(G, \mathcal{P})$ be a relatively hyperbolic pair, and let $P_1,\ld,P_d$ be representatives for the conjugacy classes in $\ca P$. Define a graph of groups  $D(G,\mathcal{P})$ as follows: the underlying graph has two vertices with $n$ edges connecting them. The vertices are labeled by $G$, the $i$-th edge is labeled by  $P_i$, and the edge homomorphisms are the inclusions $P_i \hookrightarrow G$. The fundamental group of the graph of groups $D(G,\mathcal{P})$ is called the \emph{double of $G$ along $\mathcal{P}$}, denoted $G_\de$. \end{definition} 

Note that if $G$ is torsion-free, so is $G_\de$. 

\begin{proof}[Proof of Corollary \ref{cor:sphere}] Assume that $(G, \mathcal{P})$ is a torsion-free relatively hyperbolic group pair with $\partial_B(G, \mathcal{P}) \simeq S^2$.  First we remark that $(G_\de,\ca P)$ is relatively hyperbolic by work of Dahmani \cite[Thm 0.1]{dahmani-combination}. Furthermore, \cite[\S2]{dahmani-combination} describes the Bowditch boundary for graphs of groups: the result is a tree of metric spaces where the edge spaces are the limit sets of the amalgamating subgroups. (Dahmani doesn't use this terminology -- see instead Swiatkowski \cite[Defn 1.B.1]{swiatkowski}.) In the case of $G_\de$ with $\pa_B(G, \mathcal{P})=S^2$,  $\pa_B(G_\de,\ca P)$ is a ``tree of 2-spheres", where each 2-sphere has a countable dense collection of points along which other 2-spheres are glued as in the figure below. 
\begin{figure}[h]
\begin{center}
\includegraphics[scale=.8]{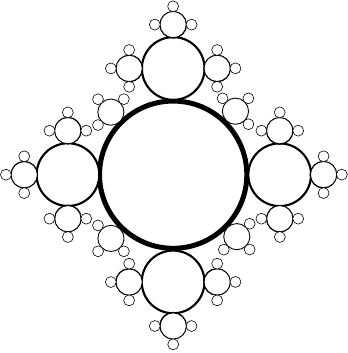}
\end{center}
\caption{The Bowditch boundary $\pa_B(G_\de,\ca P)$ is a ``tree of 2-spheres". }
\end{figure}
The Dahmani boundary inherits the structure of a tree of metric spaces from the tree structure on $\pa_B(G_\de,\ca P)$ via the collapsing map (\ref{eqn:collapse-map}) applied to $G_\de$. Each vertex space is a copy of $\pa_D(G,\ca P)$, which is a Sierpinski carpet by Theorem \ref{thm:sierpinski}. The edge spaces that meet a given vertex space are the peripheral circles $\pa P$ for $P\in\ca P$. An important part of the definition of a tree of metric spaces is that the edges spaces that meet a given vertex space must form a null family. This holds generally for the peripheral boundaries of a Dahmani boundary (Proposition \ref{prop:quotient}); it also holds in our specific case because the peripheral circles of a Sierpinski carpet are a null family \cite{cannon_sierpinski,whyburn}. It follows from \cite[Lem 1.D.2.1]{swiatkowski} that $\pa_D(G_\de,\ca P)\simeq S^2$. This completes the proof of Corollary \ref{cor:sphere}.
\end{proof}

\subsection{Duality and the Bowditch boundary (Corollary \ref{cor:duality} and its Converse)}

\begin{proof}[Proof of Corollary \ref{cor:duality}]

By a criterion of Bieri--Eckmann \cite[Cor 8.5]{bieri-eckmann-pdpairs}, to show that $(G,\ca P)$ is a PD(3) pair, it is enough to show that the double $G_\de$ is a PD(3) group and that the peripheral subgroups $P\in\ca P$ are $\PD(2)$ groups. The latter is true because the peripheral subgroups act properly and cocompactly on $\pa_B(G,\ca P)\sm\{*_P\}\simeq\R^2$, c.f.\ \cite[Thm 0.3]{dahmani-parabolic} and the assumption that our group is torsion-free. To see $G_\de$ is a PD(3) group, we use Corollary \ref{cor:sphere} to conclude $\pa_D(G_\de,\ca P)\simeq S^2$. Since $\pa_D(G_\de,\ca P)$ is a $\ca Z$-boundary for $G_\de$ \cite[Thm 0.2]{dahmani-boundary}, and $G_\de$ is torsion free, it follows that $G_\de$ is a PD(3) group by the argument of Bestvina--Mess \cite[Corollary 1.3 (b,c)]{bestvina-mess}. (See Theorem \ref{thm:bestvina} above.) 
\end{proof}

\begin{proof}[Proof of Theorem \ref{th:converse}] Let $(G, \mathcal{P})$ be a relatively hyperbolic group pair which is also a $\PD(3)$ pair.  It follows that $G$ is torsion-free and again by  \cite[Cor 8.5]{bieri-eckmann-pdpairs}, the subgroups in $\mathcal{P}$ are surface groups, and the double of $G$ along $\mathcal{P}$ is a $\PD(3)$ group.  By \cite[Thm 0.1]{dahmani-combination} $(G_\delta, \mathcal{P})$ is relatively hyperbolic, so $G_\delta$ admits a $Z$-structure with $\ca Z$-boundary $\pa_D(G_\de,\ca P)$ by Dahmani \cite{dahmani-boundary}.  It follows that $\partial_D(G_\delta, \mathcal{P})\simeq S^2$, c.f.\ Theorem \ref{thm:bestvina}.  By Proposition \ref{prop:quotient},  there is a dense collection of embedded circles in $\partial_D(G_\delta, \mathcal{P})$ such that when we form the quotient by collapsing these circles, we obtain $\pa_B(G_\de,\ca P)$. As each embedded circle in $S^2$ bounds a disk on either side, the result is a tree of 2-spheres glued along points. By \cite[Theorem 0.1]{bowditchboundary} and \cite[Theorem 9.2]{bowditchperipheral}, each of these cut points correspond to a peripheral splitting.  Furthermore, by the description of the boundary of an amalgamated product given in \cite[section 2]{dahmani-combination}, this tree of two-spheres is formed by gluing the Bowditch boundaries of the vertex groups along the limit sets of the amalgamating groups, which are the fixed points of the peripheral subgroups in this case.  Thus, the Bowditch boundary of each vertex group (relative to $\mathcal{P}$)  is $S^2$, hence $\pa_B(G, \mathcal{P})\simeq S^2$. 
\end{proof} 

\subsection{The Wall and relative Cannon conjectures (Proof of Theorem \ref{thm:wall})} \label{s:wall} 

Let $(G, \mathcal{P})$ be a  relatively hyperbolic group pair with $G$ torsion-free and $\pa_B(G, \mathcal{P})\simeq S^2$. We may assume $\mathcal{P}$ is non-empty. Choose representatives of the conjugacy classes of the peripheral subgroups $P_1,\ld, P_d$ and denote our group pair by $(G, \lb P_i \rb)$. Corollary \ref{cor:duality} implies that the double $G_\delta$ is $\PD(3)$ group. Assuming the Wall conjecture, we conclude that $G_\de=\pi_1(M)$ for some closed aspherical 3-manifold. 

Let $M'\ra M$ be the cover corresponding to $G<G_\de$. Since $G$ is finitely generated, by Scott's compact core theorem \cite{scott-compact-core}, there is a compact submanifold $N\sbs M'$ such that the inclusion induces an isomorphism $\pi_1(N)\simeq\pi_1(M')\simeq G$. Let $N_0$ be $N$ without its torus boundary components. To prove the theorem, we explain why $N_0$ admits a complete hyperbolic metric with totally geodesic boundary, and that the boundary subgroups and cusp subgroups are exactly the peripheral subgroups of $(G, \mathcal{P})$. 

{\it Claim.} (i) Any $\Z\ti\Z$ subgroup of $\pi_1(N)$ is conjugate into one of the boundary subgroups. (ii) The boundary subgroups are malnormal, i.e., if $P_i \cap {}^gP_j \neq \{1\}$ for any two boundary subgroups $P_i$ and $P_j$,  then $P_i = P_j$ and $g \in P_i $. 

To prove the claim, first note that any $\Z\ti\Z$ subgroup of a relatively hyperbolic group is contained in one of the peripheral subgroups. To see this, consider a geometrically finite action of $G$ on a hyperbolic space, and use the classification of isometries \cite[Prop.\ 4.1]{benakli-kapovich}. Now the claim follows once we explain that the boundary subgroups of $N$ and the peripheral subgroups $P_1,\ld,P_n$ are the same, up to conjugacy. (This justifies our notation in (ii).) This follows from the uniqueness of the $\PD(3)$-pair structure for pairs $(G,\{P_1,\ld,P_n\})$, where the subgroups $P_1,\ld,P_n$ do not \emph{coarsely separate} $G$ \cite[Thm 1.5]{kk_duality}. In our case $P_i<G$ does not coarsely separate because $\pa P_i\sbs\pa_D(G,\ca P)$ does not separate as the peripheral circles of a Sierpinski carpet do not separate; they are exactly the non-separating circles. Malnormality of the peripheral subgroups in torsion-free relatively hyperbolic groups is exactly \cite[Prop.\ 2.37]{osin}.  This finishes the proof of the claim.

Since every $\Z \ti \Z$ subgroup is peripheral,  $N_0$ admits a complete hyperbolic metric. To see this, observe that if $N_0$ has no higher genus boundary components, this is  Thurston's hyperbolization \cite[Theorem B]{morgan-uniformization}.  Suppose $N_0$ has higher genus boundary components. Then there are no essential annuli since this would yield a free homotopy between two curves on the boundary, impling that the group elements are conjugate. Malnormality implies that this conjugation can be done in the surface group, so the annulus is not essential.   Thus the double of $N_0$ along the higher genus boundary components is hyperbolic \cite[Theorem B]{morgan-uniformization}, and the involution of the double fixes the boundary components of $N_0$.  Since this is realized by an isometry \cite[Theorem 1.44]{Kapovichbook},  $N_0$ admits a hyperbolic metric with totally geodesic boundary components.

\bibliographystyle{alpha}
\bibliography{tpbib}

Department of Mathematics, Harvard University\\ 
\texttt{tshishikub@gmail.com}

Department of Mathematics, Tufts University\\
\texttt{genevieve.walsh@tufts.edu}

\includepdf[pages=-]{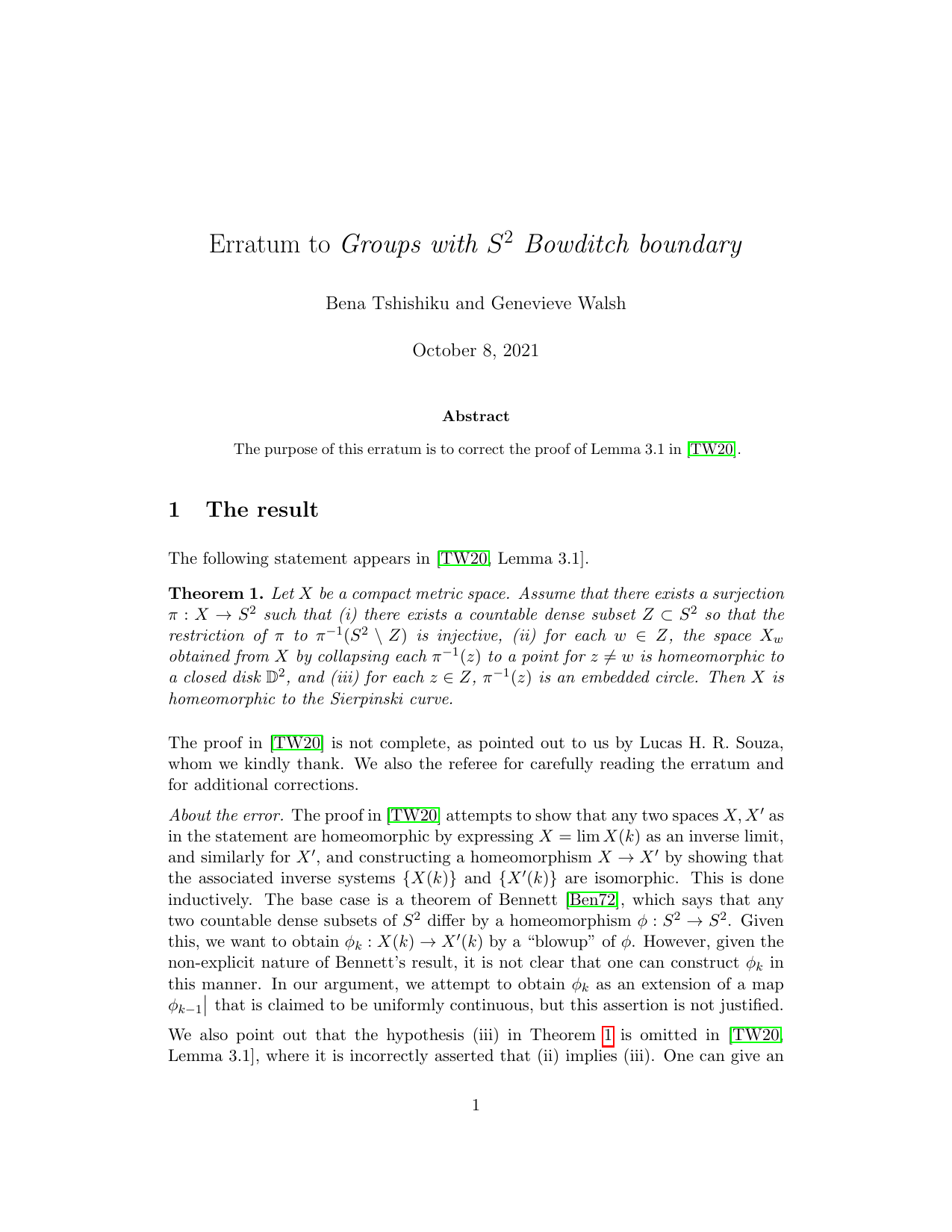}

\end{document}